\documentclass[final,onefignum,onetabnum,a4paper]{article}

\usepackage{lipsum}
\usepackage{amsfonts}
\usepackage{epstopdf}
\usepackage{hyperref}
\usepackage[a4paper, top=1in]{geometry}
\usepackage{datetime}
\newdateformat{monthyeardate}{%
  \monthname[\THEMONTH] \THEDAY, \THEYEAR}

\usepackage{xcolor}

\usepackage{amsmath}
\allowdisplaybreaks
\usepackage{amssymb}
\usepackage{commath}
\usepackage{mathtools}
\usepackage{bbm}

\usepackage{color}
\usepackage{graphicx}
\usepackage[small]{caption}
\usepackage{subcaption}

\usepackage{relsize}
\usepackage{adjustbox}
\usepackage{algorithm}
\usepackage[noend]{algpseudocode}
\usepackage{booktabs}
\usepackage{tikz}
\usetikzlibrary{decorations.pathreplacing,calligraphy}

\usepackage{verbatim}
\usepackage{ulem}


\usepackage{enumitem}
\setlist[enumerate]{leftmargin=.5in}
\setlist[itemize]{leftmargin=.5in}



\title{Entropy conservative high-order fluxes in the presence of boundaries}

\author{
Simon-Christian Klein\thanks{Institute of Mathematics, Technical University Brunswick, Brunswick, Germany, (\url{simon-christian.klein@tu-braunschweig.de})}
\and 
Philipp \"Offner\thanks{Institute of Mathematics, Johannes Gutenberg University, Mainz, Germany, (\url{poeffner@uni-mainz.de},)} 
}

\usepackage{amsthm}
\theoremstyle{definition}
\newtheorem{definition}{Definition}
\theoremstyle{theorem}
\newtheorem{lemma}{Lemma}
\theoremstyle{remark}

\theoremstyle{theorem}
\newtheorem{theorem}{Theorem}
\newtheorem{remark}{Remark}


\newcommand{\vd}{\mathrm{d}}
\newcommand{\intd}{\, \mathrm{d}}

\newcommand{\Z}{\mathbb{Z}}
\newcommand{\R}{\mathbb{R}} 
\renewcommand{\epsilon}{\varepsilon}

\newcommand{\fnum}{f^{\mathrm{num}}}
\newcommand{\ftp}{h}
\newcommand{\Ftp}{H}


\newcommand{\derive}[2] {\frac{\partial {#1} }{\partial {#2}}}
\newcommand{\derd}[2]{\frac{\vd {#1}}{\vd {#2}}}
\newcommand{\sset}[1]{ \{ #1\}}
\newcommand{\skp}[1]{\left \langle {#1} \right \rangle}
\newcommand{\zset}{{\sset{-z+1, \dots, 2p-z}}}
\newcommand{\iset}{{\zset \times \zset}}
\DeclareMathOperator{\Tr}{\mathrm{T_r}}
\DeclareMathOperator{\Tl}{\mathrm{T_l}}
\DeclareMathOperator{\Er}{\mathrm{E_r}}
\DeclareMathOperator{\El}{\mathrm{E_l}}

\DeclareMathOperator{\bigO}{\mathcal{O}}

\begin{document}

\maketitle

\begin{abstract}
In this paper, we propose a novel development in the context of entropy stable finite-volume/finite-difference schemes. 
In the first part, we focus on the construction of high-order entropy conservative fluxes. 
Already in \cite{lefloch2002fully}, the authors have generalized the second order accurate 
entropy conservative numerical fluxes proposed by Tadmor to  high-order ($2p$) by a simple centered linear combination. 
We generalize this result additionally to non-centered flux combinations which is in particular favorable if non-periodic boundary conditions are needed. 
In the second part, a Lax-Wendroff theorem for the combination of these fluxes and the entropy dissipation steering from \cite{klein2022using} is proven. 
In numerical simulations, we verify all of our theoretical findings.

\end{abstract}

\section{Introduction}\label{se_int}

Hyperbolic conservation/balance laws play a fundamental role within mathematical models for various physical processes, 
including fluid mechanics, electromagnetism and wave phenomena.  However, especially nonlinear hyperbolic conservation laws cannot be solved 
by pen and paper. Therefore, suitable and efficient numerical methods have to be applied.
Finite-Volume (FV) methods are one of the most used approaches in this context due to their excellent robustness properties and are the foundation of many modern 
algorithms \cite{abgrall2016handbook}. 
They can be also extended to high-order by the usage of suitable numerical fluxes. Nowadays one is not only interested in robust schemes discretising a given conservation law, but rather desires methods that are compatible with additional underlying physical laws.
Especially, the construction of entropy conservative and dissipative schemes have attracted a lot of attention recently as can be seen in the massive literature focusing on this topic, cf. \cite{abgrall2021relaxation, abgrall2022reinterpretation,   clain2011high, fisher2013high_2,  fisher2013discretely, kuzmin2022bound, fjordholm2012arbi, fjordholm2012entropy,  jameson2008construction, ray2016entropy, sonar1992entropy, klein2022using, klein2022stabilizing} and references therein.
To construct entropy stable schemes many techniques are based on a general description by Tadmor \cite{tadmor1987numerical,tadmor2003entropy},  where he presented how to construct entropy conservative or dissipative schemes by using special two-point fluxes. Tadmor's schemes are second order accurate \cite{tadmor1987numerical,tadmor2003entropy}. By using simple centered linear combinations of entropy conservative two-points fluxes, LeFloch, Mercier and Rhode generalized Tadmor's approach to $2p$ order \cite{lefloch2002fully} and  many entropy conservative/dissipative schemes use these fluxes \cite{fjordholm2012entropy,  lefloch2002fully, ranocha2018generalised, klein2022using}, focusing especially on periodic problems. In the following paper, we generalize the results from \cite{lefloch2002fully} by using a non-centered flux combination that is in particular favorable if non-periodic boundary conditions are needed. \\

Therefore, the paper is organized as follows: In Section \ref{se:notation} we introduce the notation and setting used in the manuscript. Then, in Section \ref{se:bdflux}, we repeat first the result from \cite{lefloch2002fully} about high-order entropy conservative 
fluxes for periodic domains. Next, we extend the result by using a non-centered flux combination, prove the existence of an entropy conserving entropy flux for such combinations and explain their usage if solutions on non-periodic domains are calculated. We extend this investigation to entropy dissipative fluxes. As a last theoretical result, in Section \ref{se:lwthm} we demonstrate that a Lax-Wendroff type theorem holds for our FV schemes if we apply our new flux combinations. 
In numerical simulations in section \ref{se_NT}, we verify all of our theoretical findings and a summary closes this publication. 


\section{Notation}\label{se:notation}

We are interested in the numerical approximation of solutions of general systems of conservation laws of the form\footnote{For simplicity, we focus in the first part on one-dimensional problems.}
\begin{equation}\label{eq_con}
\partial_t u +\partial_x f(u)=0, \quad u=u(x,t) \subset \R^N, \quad x \in \Omega \subset \R,\; t>0.
\end{equation}
equipped with adequate initial and boundary conditions. 
The conservation law \eqref{eq_con} shall be endowed with a smooth entropy-entropy flux pair $(U,F): \R^N \to \R^2$ 
which satisfies for a smooth solution $u$ the additional conservation law 
\begin{equation}\label{eq_entr}
\partial_t U(u) +\partial_x F(u)\stackrel{(\leq)}{=}0,
\end{equation}
whereas for weak solutions $u$, we obtain the inequality in \eqref{eq_entr} \cite{Lax71}. 
Further, we denote by $v(u)= \nabla U(u)$ the entropy variable associated with the given entropy $U$. 
If $U$ is strictly convex, the mapping between $u$ and $v$ is one-to-one. Therefore, we can work directly with $v$. 
A remapping is possible as described in \cite{Harten83b}, and a scalar-valued function, called potential, $\psi$ exists with
$
\psi= v \cdot f-F.
$ In the following, we will always assume a strictly convex entropy function additionally. \\
Translating this to the discrete framework, numerical schemes should not only be constructed to satisfy \eqref{eq_con} but rather 
ensure as well \eqref{eq_entr}. In his pioneer work \cite{tadmor1987numerical}, Tadmor constructed semi-discrete finite difference schemes
satisfying a discrete form of \eqref{eq_entr}. We follow the classical notation and speak about
 entropy conservative schemes if the equality in \eqref{eq_entr} is fulfilled whereas a scheme is called entropy dissipative in case of an
  inequality. Up to now,  
  a lot of attention has been allocated to the construction of entropy conservative/stable schemes in different frameworks
  \cite{abgrall2018general, abgrall2021analysis_II, chan2018discretely, fjordholm2012arbi, gassner2016split, hesthaven2019entropy}.
  Here, we follow the classical finite volume (FV) (or finite difference (FD)) setting used in 
  \cite{tadmor1987numerical, lefloch2002fully, fjordholm2012arbi}.
  Using uniform cells with cell center $x_j$ and mesh parameter $\Delta x=x_{j+\frac 1 2}-x_{j - \frac 1 2}$. 
  Our FV/FD schemes are given in semidisrete form 
  via 
  \begin{equation} \label{eq_semidiscrete}
\derd{u_k}{t}= \frac{1}{\Delta x} \left(f_{k-\frac 1 2}-f_{k+\frac 1 2} \right),
  \end{equation}
where $f_{k \pm \frac 1 2}$ denotes a numerical flux and we can use up to $2p+1$ cell states as arguments, i.e. a centered flux with $2 p$ arguments is given as
$
		f_{k+\frac 1 2} := f(u_{k-p+1}, \dots,  u_{k+p}).
$
Such a numerical flux is called  consistent with the continuous flux  if $ f(u, \dots,  u) =f(u)$ holds.
   Analogously,  a consistent entropy numerical fluxes  is given through
  $F_{k+\frac 1 2} = F(u_{k-p+1}, \dots,  u_{k+p}) $. 
  The key ingredient of our FV/FD schemes are the selected numerical fluxes.  
  Working with two-point fluxes, i.g. $f_{k+\frac 1 2} = \ftp(u_{k}, u_{k+1})$ and following 
    \cite{tadmor1987numerical}, we a  numerical flux is
 entropy conservative (dissipative)  if 
     \begin{equation}\label{entropy }
    \left(v_{k+1}-v_k\right) h(u_{k}, u_{k+1}) -\left(\Psi_{k+1}-\Psi_k \right)  \stackrel{(\leq )}{=}0
    \end{equation}
is satisfied. Using entropy conservative (dissipative) fluxes in our FV/FD discretization \eqref{eq_semidiscrete}, it is easy to show \cite{tadmor1987numerical} that 
 the resulting  methods are entropy conservative (dissipative) with numerical entropy flux 
 \[
 \Ftp(u_k,u_{k+1})= \frac 1 2  \left(v_{k+1}+v_k\right) \ftp(u_{k}, u_{k+1})-  \frac 1 2 \left(\Psi_{k+1}+\Psi_k \right).
 \]
We will from now on denote entropy conservative two-point fluxes as $\ftp(u_k, u_{k+1})$, whereas entropy dissipative two-point fluxes, like the Godunov flux, are written as
$g(u_k, u_{k+1})$. 

\section{Entropy conservative boundary fluxes} \label{se:bdflux}

The aforementioned entropy conservative fluxes are second order accurate.
The authors of \cite{lefloch2002fully} demonstrated how to construct a $2p$ order accurate entropy conservative flux by using a linear combination of 
Tadmor's two-point fluxes and to ensure the entropy conservation for their considered scheme for periodic domains. 
They further constructed fully discrete schemes of second and third order by applying implicit RK methods and validate the Lax-Wendroff theorem. However, we concentrate on the semi-discrete version only and generalize the used combination to be able to handle also non-periodic problems.
Further, we also validate a Lax-Wendroff type theorem. For the fully discrete setting, one can work with special implicit methods or the relaxation approach from \cite{ranocha2020relaxation}. However, this is not a topic of the current investigation.
We start by a result from \cite{fisher2013high_2} which was later generalized in \cite{ranocha2018comparison}.

\begin{lemma}[Symmetric flux expansions {\cite{fisher2013high_2, ranocha2018comparison}}] \label{thm:rfe}

If the numerical flux $\ftp$ is smooth, consistent with the flux $f$, and symmetric, a power series expansion of $\ftp$ can be written as
	\begin{equation}
		\ftp(u(x_0), u(x)) = f(u(x_0)) + \frac {f^{\prime}(u(x_0))} {2} (x - x_0) + \sum_{k\geq 2} \alpha_k (x-x_0)^k,
	\end{equation}
where $\alpha_k$ are scalar coefficients and the multi-index notation is used. 
\end{lemma}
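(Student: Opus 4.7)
The plan is to Taylor-expand the smooth function $x \mapsto \ftp(u(x_0), u(x))$ about $x = x_0$ and then identify the zeroth- and first-order coefficients using the consistency and symmetry hypotheses; all coefficients from order two onward are simply lumped together as the remainder $\sum_{k\ge 2}\alpha_k(x-x_0)^k$, with no further structure needed.

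The zeroth-order coefficient is immediate from consistency: evaluating at $x=x_0$ gives $\ftp(u(x_0),u(x_0)) = f(u(x_0))$. For the first-order term, the chain rule yields
\begin{equation*}
\left.\frac{d}{dx}\ftp(u(x_0),u(x))\right|_{x=x_0} = (\partial_2 \ftp)(u(x_0),u(x_0))\, u'(x_0),
\end{equation*}
so the task reduces to computing $(\partial_2\ftp)(u,u)$. Here the symmetry assumption $\ftp(a,b)=\ftp(b,a)$ enters: differentiating this identity in $b$ and then setting $b=a$ gives $(\partial_2 \ftp)(a,a) = (\partial_1 \ftp)(a,a)$. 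Separately, differentiating the consistency relation $\ftp(u,u)=f(u)$ in $u$ produces $(\partial_1 \ftp)(u,u)+(\partial_2\ftp)(u,u) = f'(u)$. Combining these two facts yields the crucial identity
\begin{equation*}
(\partial_2 \ftp)(u,u) \;=\; \tfrac{1}{2}\,f'(u).
\end{equation*}
Substituting back gives the claimed first-order coefficient $\tfrac{1}{2}f'(u(x_0))$ (with $u'(x_0)$ absorbed into the interpretation of $(x-x_0)$ via the multi-index convention referenced in the statement).

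Once these two coefficients are pinned down, the remainder of the expansion follows trivially from the smoothness of $\ftp$ and $u$ by Taylor's theorem; the higher-order coefficients $\alpha_k$ are unconstrained scalars depending on derivatives of $\ftp$ at $(u(x_0),u(x_0))$ and of $u$ at $x_0$.

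The only genuine step here is the symmetry-plus-consistency argument yielding the factor $\tfrac{1}{2}$; the rest is bookkeeping. I do not expect any real obstacle — the main subtlety is merely to be careful about the multi-index notation in the vector case, where $f'$ is the Jacobian and $(\partial_j \ftp)$ are partial Jacobians, but the same linear-algebraic manipulation goes through componentwise without change.
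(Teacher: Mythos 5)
Your argument is correct: it is the standard proof of this expansion, obtained by Taylor-expanding $x\mapsto \ftp(u(x_0),u(x))$ about $x_0$ and pinning down the first-order coefficient via the two identities $\partial_2\ftp(a,a)=\partial_1\ftp(a,a)$ (from symmetry) and $\partial_1\ftp(u,u)+\partial_2\ftp(u,u)=f'(u)$ (from consistency), giving $\partial_2\ftp(u,u)=\tfrac12 f'(u)$. The paper does not prove this lemma at all but cites it from the literature, and your derivation matches the one given in those references; your parenthetical about absorbing $u'(x_0)$ is the right reading of the (admittedly loose) notation $f'(u(x_0))$ in the statement, which is really the derivative of $f\circ u$ at $x_0$.
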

\begin{remark}
We would like to point out that this result was used in a  different context. 
Fisher and Carpenter generalized the needed tools for high-order entropy conservative fluxes in the context of summation-by-parts (SBP) 
operators and applied a flux-differencing (telescoping) ansatz where they changed the volume fluxes 
inside each element (or block) depending on the used FD (or DG) framework. 
This approach has attract many  attentions since then, cf.\cite{fisher2013discretely, gassner2016split, carpenter2016entropy, chen2017entropy, crean2018entropy}.
\end{remark}
We use his result to simplify the proof from  \cite{lefloch2002fully} and the theorem is formulated as follows:
\begin{theorem}[High-order entropy conservative fluxes for periodic domains {\cite{lefloch2002fully}}]
	\label{thm:LMR}
	Let $\ftp(u_i,u_j)$ be a smooth, consistent, symmetric and entropy conservative numerical two-point flux. Then, the numerical flux 
	 defined by 
	\begin{equation} \label{eq:lmrflux}
		f_{k+\frac 1 2} = f(u_{k-p+1}, \dots,  u_{k+p}) = \sum_{r= 1}^p c^r_p \sum_{q=0}^{r-1} \ftp(u_{k-q}, u_{k+r-q})
	\end{equation}
	is a semi-discrete entropy conservative flux of order $2p$ with entropy flux
	\begin{equation}\label{eq_equation_flux}
		F_{k+\frac 1 2} = F(u_{k-p+1}, \dots,  u_{k+p}) = \sum_{r= 1}^p c^r_p \sum_{q=0}^{r-1} \Ftp(u_{k-q}, u_{k+r-q}),
	\end{equation}
	if the coefficients satisfy
	\[
		\sum_{r=1}^p r c_p^r = 1 ,\quad \sum_{r=1}^p c_p^r r^{2k+1} = 0, \quad \forall k \in \sset{1, \dots, p-1}.
		\]
\end{theorem}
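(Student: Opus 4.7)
The proof can be organized around a telescoping identity that reduces the $p \times p$ double sum to a simple $p$-term sum, making both the discrete entropy conservation and the order-of-accuracy claims essentially immediate.

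\textit{Telescoping reduction and entropy conservation.} For each fixed $r$, the two inner sums appearing in $f_{k+\frac 1 2}$ and $f_{k-\frac 1 2}$ are related by the index shift $q \mapsto q+1$, and all but the two endpoints cancel pairwise. Explicitly,
\[
 \sum_{q=0}^{r-1} \ftp(u_{k-q},u_{k+r-q}) - \sum_{q=0}^{r-1} \ftp(u_{k-q-1},u_{k+r-q-1}) = \ftp(u_k, u_{k+r}) - \ftp(u_{k-r}, u_k),
\]
so that
\[
 f_{k+\frac 1 2} - f_{k-\frac 1 2} = \sum_{r=1}^p c_p^r \bigl[\ftp(u_k, u_{k+r}) - \ftp(u_{k-r}, u_k)\bigr],
\]
with the identical reduction valid for $F_{k+\frac 1 2} - F_{k-\frac 1 2}$ in terms of $\Ftp$. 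The definition of $\Ftp$ together with Tadmor's relation $(v_j-v_i)\ftp(u_i,u_j) = \Psi_j-\Psi_i$ yields the pointwise identities $\Ftp(u_i, u_k) = v_k \ftp(u_i, u_k) - \Psi_k$ and $\Ftp(u_k, u_j) = v_k \ftp(u_k, u_j) - \Psi_k$. Substituting these in the telescoped $F$-expression, the two $\Psi_k$ contributions cancel term by term and I obtain $F_{k+\frac 1 2} - F_{k-\frac 1 2} = v_k(f_{k+\frac 1 2} - f_{k-\frac 1 2})$, which is exactly what is needed to turn \eqref{eq_semidiscrete} into the discrete entropy conservation law with the claimed numerical entropy flux.

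\textit{Order of accuracy.} For smooth $u$ I would expand $\ftp(u(x_k), u(x_k+r\Delta x))$ around $x_k$ using Lemma \ref{thm:rfe}, and use the symmetry $\ftp(u_{k-r}, u_k) = \ftp(u_k, u_{k-r})$ to obtain the companion expansion in $(-r\Delta x)$. In the difference the even powers of $r\Delta x$ cancel while the odd ones double, so
\[
 \frac{f_{k+\frac 1 2} - f_{k-\frac 1 2}}{\Delta x} = \partial_x f(u)(x_k) \sum_{r=1}^p r\, c_p^r + \sum_{m=1}^{p-1} \beta_{2m+1}(x_k)\, \Delta x^{2m} \sum_{r=1}^p r^{2m+1} c_p^r + O(\Delta x^{2p}),
\]
with the leading coefficient arising from the linear term in Lemma \ref{thm:rfe}. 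The prescribed moment conditions set the first sum to $1$ and annihilate every correction indexed by $m \in \sset{1,\dots,p-1}$, leaving a truncation error of order $\Delta x^{2p}$.

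\textit{Main obstacle.} The one spot that requires care is the identification of the leading odd coefficient with the full derivative $\partial_x f(u)(x_k)$ rather than just $\tfrac 1 2 f'(u(x_k))$. This follows from the consistency and symmetry of $\ftp$: differentiating $\ftp(u,u)=f(u)$ and invoking symmetry, the partial derivative of $\ftp$ with respect to its second argument on the diagonal equals $\tfrac 1 2 f'(u)$, and the chain rule applied to $x \mapsto u(x)$ supplies the missing $u'(x_k)$ factor. Once this matching is in place, both halves of the theorem collapse to the combinatorial identity above together with the classical moment characterization of a centered finite-difference stencil.
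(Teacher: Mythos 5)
Your proposal is correct and follows essentially the same route as the paper's proof: the telescoping reduction of the double sum to $\sum_r c_p^r[\ftp(u_k,u_{k+r})-\ftp(u_{k-r},u_k)]$, the use of Tadmor's relation to trade $\skp{v_k,\ftp}$ differences for $\Ftp$ differences, and the symmetric flux expansion of Lemma \ref{thm:rfe} with even-power cancellation and the moment conditions for the $2p$ accuracy. Your explicit derivation of $\Ftp(u_i,u_k)=\skp{v_k,\ftp(u_i,u_k)}-\Psi_k$ just spells out a step the paper states without comment.
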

\begin{proof}
	The difference of two fluxes can be written as
	
	\[
	 \derd{u_k}{t} = \frac{f_{k-1/2} - f_{k+1/2} }{\Delta x} =\sum_{r = 1}^p c_p^r  \frac{\ftp(u_{k-r}, u_{k}) - \ftp(u_{k}, u_{k + r})}{\Delta x},
	\]
	as all other contributions cancel out.
	First, we present the entropy conservation.  Using elementary calculations, we obtain 
	\[
	\begin{aligned}
		\derd{U(u_k)}{t} = &\skp{\derd{U}{u},\derd{u_k}{t}}
		=  \skp{v_k,  \frac{1}{\Delta x} \left(f_{k-\frac 1 2}- f_{k+\frac 1 2} \right)}
		= \skp{v_k,  \sum_{r=1}^p c^r_p \frac{\ftp(u_{k-r}, u_{k}) - \ftp(u_{k}, u_{k+r})}{\Delta x} } \\
		= &
		\sum_{r=1}^p c^r_p \skp{ v_k, \frac{\ftp(u_{k-r}, u_{k}) - \ftp(u_{k}, u_{k+r})}{\Delta x} } 
		=
		\sum_{r=1}^p c^r_p  \frac{\Ftp(u_{k-r}, u_{k}) - \Ftp(u_{k}, u_{k+r})}{\Delta x}\\
		= & - \frac{1}{\Delta x} \left(F_{k+\frac 1 2}- F_{k-\frac 1 2} \right).
	\end{aligned}	
\]
	This demonstrates the semi-discrete per cell entropy conservation of the flux. \\
	To demonstrate the $2p$ order of accuracy, Lemma \ref{thm:rfe} is applied.  Note that $\alpha_0 = f(u(x_0))$ and $\alpha_1 = f'(u(x_0))$ hold, and therefore we can expand the fluxes on the grid  into
	\[
		\ftp(u_k, u_{k+r}) = f(u(x_k)) + \frac 1 2 f'(u(x_k)) \Delta x r + \sum_{k \geq 2} \alpha_k r^k (\Delta x)^k.
	\]
	By applying this series expansion, one finds 
	\[
	\begin{aligned}
		\derd{u_k}{t} 
		=&-\sum_{r=1}^p c_p^r\frac{ \sum_{l=0}^\infty \alpha_l (r^l - (-r)^l) (\Delta x)^l}{\Delta x}
		= -\sum_{r = 1}^p c_p^r\frac{ (f'(u_k) + f'(u_k)) r \Delta x} {2\Delta x}  \\
		&- \sum_{r = 1}^p c_p^r
		\frac{2 \sum_{l \geq 1}r^{2l+1}(\Delta x)^{2l+1}\alpha_{2l+1} }{\Delta x} 
		=- f'(u_k) \sum_{r=1}^p r c_p^r 
		-
		2 \sum_{l \geq 1} (\Delta x)^{2l} \sum_{r=1}^p c_p^r r^{2l+1}\alpha_{2l+1} \\
		=&-f'(u_k) + \bigO\left(\abs{\Delta x}^{2p}\right),
	\end{aligned}
	\]
	where we use that even components of the sum cancel out due to the symmetry of the numerical fluxes. Uneven components of the sum vanish because the condition $\sum_{r=1}^p c_p^r r^{2l+1}=0$ for $l \in \{1, \dots, p-1\}$ holds.

\end{proof}

One of the key ingredients in the argument above is that the linear combination of the numerical fluxes is centered around $x_k$. 
Due to this fact, they can only be used if enough cells around cell $k$ are available, for example if periodic boundary conditions are considered. 
One example of methods using those linear combination are the TeCNO schemes  developed in \cite{fjordholm2012entropy} with order above two. 
However, up to our knowledge for other boundary conditions, like inflow conditions, one falls back to the application of second order two-point fluxes .Therefore, we give now the following extension: 

\begin{definition}[Linear combined fluxes] \label{def:lcflux}
	Let $\ftp(u_L,u_R)$ be a two-point flux and \newline $A  \in \R^{\iset}$ be a matrix that satisfies 
	\[
	\forall  l \in Z = \{-z+1, \dots, 2p-z \}: A_{ll} = 0.
	\]
	We  define a new numerical flux through
	\begin{equation}\label{eq_combination}
	f(u_{k-z+1}, \dots, u_{k+2p-z}) := \sum_{l, m = -z+1}^{2p-z} A_{lm} \ftp(u_{k+l}, u_{k+m}),	
	\end{equation}
	and call it a linear combination numerical flux with stencil $k + Z = \{k-z+1, \dots, k+2p-z\}$.  Clearly, the value $z$ shifts the stencil of the designed  flux, and $z = p$ corresponds to a symmetric stencil. The set $Z$ contains the indices of the matrix $A$. These indices are also the offsets of the arguments of the two-point flux from the grid point $k$.

\end{definition}
Obviously, the fluxes \eqref{eq_combination} are a generalization of the fluxes constructed in \cite{lefloch2002fully} and used in Theorem \ref{thm:LMR}
\begin{remark}
 The requirement of a diagonal zero is not obviously needed. But we want to avoid that the two-point flux, evaluated with equal arguments, is used to approximate the analytical flux. Additionally, we observed in numerical simulations that this approach leads to an increase in the usable CFL number of the constructed schemes. 
 
\end{remark}
\begin{lemma}\label{lem:matrixflux}
	The high-order flux \eqref{eq:lmrflux} can be written as
	\begin{equation}
		f_{k+ \frac 1 2} = \sum_{l, m = -z+1}^{2p-z} A_{lm} \ftp(u_{k+l}, u_{k+m})
	\end{equation}
	using the matrix $A \in \R^{\iset}$ with
	$
	A_{l, m} = \frac{c_p^{\abs{l - m}}} 2
	$
	and $z = p$. 
\end{lemma}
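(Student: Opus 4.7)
The plan is to prove this by a direct re-parameterization of the double sum defining the LMR flux \eqref{eq:lmrflux}, followed by a symmetrization using the assumed symmetry of $\ftp$. No deep structural argument is required; the content of the lemma is essentially a bookkeeping identity between two ways of indexing the same set of two-point flux evaluations.

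First I would introduce the change of summation variables $(r,q) \mapsto (l,m)$ with $l = -q$ and $m = r-q$. This is a bijection from $\{(r,q): 1\le r \le p,\ 0 \le q \le r-1\}$ onto the set of ordered index pairs $(l,m)$ with $m-l \in \{1,\dots,p\}$, $l \le 0$, $m \ge 1$, and $l,m \in Z = \{-p+1,\dots,p\}$. Under this relabelling, $r = m-l$, so each summand $c_p^r \ftp(u_{k-q}, u_{k+r-q})$ becomes $c_p^{m-l}\ftp(u_{k+l}, u_{k+m})$, and the LMR flux reads
\[
f_{k+\frac 1 2} = \sum_{\substack{l,m \in Z \\ l\le 0 < m,\ m-l \le p}} c_p^{\,m-l}\,\ftp(u_{k+l}, u_{k+m}).
\]

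Next, because $\ftp$ is symmetric in its arguments, I would split each term as $\ftp = \tfrac12 \ftp(\cdot,\cdot) + \tfrac12 \ftp(\cdot,\cdot)^{\mathrm{swap}}$ and absorb the two halves into a single double sum over all $(l,m) \in Z\times Z$. Setting $A_{lm} = \tfrac12 c_p^{|l-m|}$ on the index pairs that satisfy the straddling/range condition above (with the convention $c_p^r = 0$ for $r = 0$ or $r > p$, so in particular $A_{ll}=0$) extends $A$ by zero on all remaining entries. The double sum then has exactly the form required by Definition~\ref{def:lcflux}, with $z = p$ and stencil $\{-p+1,\dots,p\}$. Observe also that the diagonal zero condition $A_{ll}=0$ is automatic since $|l-l|=0$ corresponds to no term in the original LMR sum.

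The main obstacle, as far as there is one, is the careful treatment of the range of $(l,m)$: the naive formula $A_{lm}=\tfrac12 c_p^{|l-m|}$ applied to every off-diagonal $(l,m)\in Z\times Z$ overcounts by including non-straddling pairs (for instance $(l,m)=(-1,0)$ when $p=2$), so I would be explicit that $A$ is only populated on straddling pairs with $|l-m| \le p$ and zero elsewhere. Once this support is pinned down, the identification of the two forms of the flux is immediate from the reindexing and the symmetrization step.
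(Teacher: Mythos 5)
Your proof is correct. The paper states Lemma~\ref{lem:matrixflux} without proof, so there is no argument to compare against; your reindexing $(r,q)\mapsto(l,m)=(-q,\,r-q)$, which is a bijection onto the straddling pairs $l\le 0<m$ with $m-l\le p$, followed by the symmetrization $\ftp(u_{k+l},u_{k+m})=\tfrac12\ftp(u_{k+l},u_{k+m})+\tfrac12\ftp(u_{k+m},u_{k+l})$, is exactly the bookkeeping the lemma leaves implicit. Your caveat about the support of $A$ is a genuinely useful clarification rather than pedantry: read literally, $A_{l,m}=c_p^{|l-m|}/2$ on all of $Z\times Z$ would assign, e.g., $A_{-1,0}=c_p^1/2$ for $p=2$ and thereby introduce the spurious term $\ftp(u_{k-1},u_k)$, which belongs to $f_{k-\frac12}$ rather than $f_{k+\frac12}$. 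The appendix matrix $A^{2,0}$ confirms your reading (its $(-1,0)$ and $(1,2)$ entries are zero even though $c_2^1/2=2/3$), so the formula in the lemma must indeed be understood as holding only on straddling pairs with $|l-m|\le p$ and as zero elsewhere, exactly as you state.
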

Writing down the same flux for the variable $k = i +1$ amounts to shifting the matrix $A$. If one denotes with $Z = \zset$ the index set for the matrix $A$ and with $\Tr$ the index shift operator \[\Tr: \R^{Z \times Z} \to \R^{(Z-1) \times (Z-1)},\quad A \mapsto \tilde A, \quad \tilde A_{lm} = A_{l+1, m+1}\] 
and $\tilde z = z+1$ one finds
\[
\begin{aligned}
	f_{i + \frac 1 2}  = &\sum_{l, m = -z+1}^{2p-z} A_{lm} \ftp(u_{i+l}, u_{i+m}) &=& \sum_{l, m= -z + 1}^{2p-z} A_{l, m} \ftp(u_{k + l -1}, u_{k+m-1}) \\
	=& \sum_{l, m = - \tilde z + 1}^{2p-\tilde z} A_{l+1, m+1} \ftp(u_{k+l}, u_{k+m}) &=& \sum_{l, m = - \tilde z + 1}^{2p-\tilde z} (\Tr A)_{l, m} \ftp(u_{k+l}, u_{k+m}) = f_{k-\frac 1 2}.
\end{aligned}
\]
Please be aware of the fact that $l \to l-1$ and $m \to m-1$ were shifted in the above derivation between the first and second row. This also implies that the matrix $\Tr A$ has an index range lowered by one. The same applies to an operator $\Tl = \Tr^{-1}$ that is used to transfer the variable to the left. We will assume in the following theorem that $A_{kl} = A_{lk}$ holds, i.e. the coefficient matrices are symmetric. This is not a strong restriction as our used entropy conservative two-point fluxes are anyway symmetric. We will now regularly write sums as
\[
f_{i + \frac 1 2} = \sum_{l, m = -z}^{2p-z} (\Er A)_{lm} \ftp(u_{i+l}, u_{i+m}), \quad f_{i + \frac 1 2} = \sum_{l, m = -z+1}^{2p-z+1} (\El A)_{lm} \ftp(u_{i+l}, u_{i+m}).
\]
Note that the used summations start one index lower than before or ends one index higher than before. 
This will be needed to manipulate the difference of two fluxes one cell apart. We therefore use the embedding operator
\[
\Er: \R^{Z\times Z} \to \R^{\tilde Z \times \tilde Z}, A \mapsto \Er A, (\Er A)_{lm} = \begin{cases}A_{lm}& (l, m) \in Z^2 \\ 0& (l, m) \not \in Z^2 \end{cases}
\]
with $\tilde Z = Z \cup \{\max Z + 1 \}$ and the counterpart $\El$.
We will now, armed with the previous notation and lemmas, begin to derive boundary aware fluxes. Our first lemma gives sufficient conditions for two fluxes to provide a high-order scheme, when used together in a discretization.
\begin{lemma}[Non-centered high-order fluxes]\label{lem:HOBF}
Let $A, B \in \R^{Z\times Z}$. The fluxes defined by
	\begin{equation}
		f_{k - \frac 1 2} = \sum_{l,m =-z+1}^{2p-z} \Er\Tr A_{lm}\ftp(u_{k+l}, u_{k+m}),
		\label{eq:ansatzA}
	\end{equation}
	\begin{equation}
		f_{k + \frac 1 2} = \sum_{l,m =-z+1}^{2p-z} B_{lm}\ftp(u_{k+l}, u_{k+m})
		\label{eq:ansatzB}
	\end{equation}
	are linear combination fluxes of finite difference order $2p-1$ for $f$ with the same stencil $\sset{k - z, \dots, k+2p-z} = k + Z(z)$ if the coefficients satisfy the relations
	\begin{equation}
		\Er \Tr A_{lm} - B_{lm} = 0 , \quad \forall (l, m) \in (Z \setminus\sset{0}) \times (Z \setminus\sset{0}),
		\label{eq:taylorrestrict}
	\end{equation}
	\begin{equation}
		\begin{aligned}
			v_m = (\Er \Tr A_{0m} - B_{0m}) + (\Er \Tr A_{m0} - B_{m0}) \in \R^{Z}, \\
			\sum_{m = -z}^{2p-z} v_m {(m)^j} = 2\delta_{j1}, \quad j \in \sset{0, \dots, 2p-1}
			\label{eq:forderres},
		\end{aligned}
	\end{equation}
	\begin{equation}
		\forall l \in Z: A_{ll}= 0 \wedge v_0 = 0,
		\label{eq:lcfres}
	\end{equation}
	\begin{equation}
		\sum_{l,m = -z+1}^{2p-z} \Er \Tr A_{lm} = 1,
		\label{eq:fconres}
	\end{equation}
	and the flux $h(u_L u_R)$ is a consistent, smooth and symmetric numerical two-point flux.
	\begin{proof}
		We start with the ansatzes given in equation \eqref{eq:ansatzA} and \eqref{eq:ansatzB} above and look at their difference
		\begin{equation}
			f_{k + \frac 1 2} - f_{k - \frac 1 2} = \sum_{l,m =-z}^{2p-z} (B_{lm} - \Er \Tr A_{lm}) \ftp(u_{k+l}, u_{k+m}) = \sum_{l =-z}^{2p-z} (v_l)\ftp(u_{k}, u_{k+l}).
		\end{equation}
		We will use the series expansion of $f(u_k, u(x))$ from Lemma \ref{thm:rfe}, for which we would like $f(u_l, u_r)$ to satisfy either $u_l = u_k$ or $u_r = u_k$, as $k\Delta x $ will be the expansion point. The first set of restrictions on the coefficients, as given in equation \eqref{eq:taylorrestrict}, enables exactly these Taylor expansions. Therefore,
		\[
		(\Er \Tr A_{0l} + \Er \Tr A_{l0}) - (B_{0l} + B_{l0}) = v_l \iff \Er \Tr A_{0_l} - B_{0l} = \frac{v_l}{2}
		\]
		 are the only non-zero components in the differences between both matrices using the symmetry.
		The expansion is in fact given as
		\[
		\sum_{l =-z}^{2p-z} v_l \ftp(u_{k}, u_{k+l}) = \sum_{l =-z}^{2p-z} v_l\left(\sum_{j = 0}^\infty \alpha_j(u_k) l^j (\Delta x)^j\right) = \sum_{j = 0}^\infty	\sum_{l =-z}^{2p-z} v_l \alpha_j l^j (\Delta x)^j
		\]
		with $\alpha_0(u_k) = f(u_k)$, $\alpha_1(u_k) = \derive{f}{u}(u_k)/2$ by Theorem \ref{thm:rfe}.
		This should in turn equal $\Delta x\derive f x + \bigO((\Delta x)^{2p})$. Swapping the sums and comparing the coefficients of the derivatives leads to
		\begin{equation}
			\sum_{l=-z}^{2p-z} v_l  l^{j} = 2 \delta_{1j}, \quad j = 0, ..., 2p-1.
		\end{equation}
		
		This ensures the order of exactness of the flux. We further note that this equation for $j = 0$ implies 
		\begin{equation}
			\sum_{l,m = -z}^{2p-z} B_{lm} =\sum_{l,m = -z}^{2p-z}\Er \Tr A_{lm} + \sum_{l,m = -z}^{2p-z} B_{lm} - \Er \Tr A_{lm} = 1 + \frac 1 2 \sum_{l= -z}^{2p-z} v_{l} = 1.
		\end{equation}
		Therefore, both fluxes are conservative/consistent if $h$ is and  equation \eqref{eq:fconres} is satisfied. Finally, due to \eqref{eq:lcfres} and \eqref{eq:taylorrestrict}, both fluxes are clearly linear combined two-point fluxes in the sense of Definition \ref{def:lcflux}.
	\end{proof}
\end{lemma}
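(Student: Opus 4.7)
The plan is to derive the result by directly mimicking the centered argument from Theorem \ref{thm:LMR}, but accounting for the fact that the two fluxes $f_{k-\frac 12}$ and $f_{k+\frac 12}$ are now built from two a priori unrelated coefficient matrices $\Er\Tr A$ and $B$ sharing the same stencil $k + Z$. First I would form the telescoping difference
\[
f_{k+\frac 12} - f_{k-\frac 12} = \sum_{l,m=-z}^{2p-z} \bigl(B_{lm} - \Er\Tr A_{lm}\bigr)\,\ftp(u_{k+l},u_{k+m}),
\]
which is the only combination that actually appears in the semi-discrete scheme \eqref{eq_semidiscrete}. The goal is to show that, after the right conditions on $A$ and $B$, this difference equals $\Delta x\,\partial_x f(u_k) + \mathcal O\bigl(|\Delta x|^{2p}\bigr)$.

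Next I would use condition \eqref{eq:taylorrestrict} to kill every term in the double sum whose indices lie in $(Z\setminus\{0\})\times(Z\setminus\{0\})$. Only terms of the form $\ftp(u_k,u_{k+l})$ and $\ftp(u_{k+l},u_k)$ then remain, and by symmetry of $\ftp$ they collapse into $\sum_{l\in Z} v_l\,\ftp(u_k,u_{k+l})$ with $v_l$ as defined in \eqref{eq:forderres}. This is the step where the simplification to a ``one-sided'' argument happens: the leftover sum has the form of a two-point flux evaluated at the fixed left state $u_k$, so Lemma \ref{thm:rfe} applies directly. Expanding $\ftp(u_k,u_{k+l}) = \sum_{j\ge 0}\alpha_j(u_k)\,l^j(\Delta x)^j$ with $\alpha_0 = f(u_k)$ and $\alpha_1 = \tfrac12 f'(u_k)$, swapping the summations, and matching against the Taylor expansion of $\Delta x\,\partial_x f(u_k)$ yields the moment conditions $\sum_l v_l\,l^j = 2\delta_{1j}$ for $j = 0,\dots,2p-1$, which is exactly \eqref{eq:forderres}. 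The truncation error is then $\mathcal O(\Delta x^{2p})$ in the flux, i.e. finite difference order $2p-1$ after dividing by $\Delta x$.

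With these ingredients the remaining bookkeeping is routine. The $j=0$ moment relation says $\sum_{l\in Z} v_l = 0$, and combined with the assumed normalization \eqref{eq:fconres} this forces $\sum_{lm} B_{lm} = \sum_{lm}\Er\Tr A_{lm} = 1$, so both fluxes are consistent once $\ftp$ itself is. The diagonal condition in \eqref{eq:lcfres} together with $v_0 = 0$ guarantees that neither flux uses the meaningless evaluation $\ftp(u_k,u_k)$, so that both $f_{k\pm\frac12}$ genuinely are linear combined fluxes in the sense of Definition \ref{def:lcflux} over the same stencil.

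The main obstacle I anticipate is the index gymnastics with $\Er$, $\Tr$ and their companions: one must be careful that the two matrices $\Er\Tr A$ and $B$ live on the same index set $\tilde Z\times\tilde Z$ before their difference can even be formed, and that shifting the stencil of $f_{k-\frac12}$ by $\Tr$ does not accidentally reintroduce a diagonal entry that \eqref{eq:lcfres} was meant to exclude. Once the notational setup is clean, the analytic content reduces to the classical moment-matching calculation, and the only essentially new ingredient compared with \cite{lefloch2002fully} is that, because the stencil is no longer symmetric about $x_k$, odd moments need not vanish automatically and must instead be imposed via \eqref{eq:forderres}.
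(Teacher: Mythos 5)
Your proposal is correct and follows essentially the same route as the paper's proof: the same telescoping difference $f_{k+\frac 12}-f_{k-\frac 12}$, the same use of \eqref{eq:taylorrestrict} to reduce to a single sum $\sum_l v_l \ftp(u_k,u_{k+l})$, the same application of Lemma \ref{thm:rfe} followed by moment matching to obtain \eqref{eq:forderres}, and the same consistency argument from the $j=0$ moment combined with \eqref{eq:fconres}. Your closing remark about the index bookkeeping with $\Er$ and $\Tr$ and the non-vanishing odd moments accurately identifies the only genuinely new content relative to the centered case.
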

The last lemma  states sufficient conditions for two fluxes to be of high-order if used together and is similar to the one from  \cite{lefloch2002fully}. 
Finally, we demonstrate that our new combined fluxes can be as well entropy conservative. 
\begin{theorem}[Entropy conservative linear combination fluxes] \label{lem:ECF}
	The fluxes defined by
	\begin{equation}
		f_{k - \frac 1 2} = \sum_{l,m =-z+1}^{2p-z} \Er \Tr A_{lm}\ftp(u_{k+l}, u_{k+m}),
		\label{eq:ECansatzA}
	\end{equation}
	\begin{equation}
		f_{k + \frac 1 2} = \sum_{l,m =-z+1}^{2p-z} B_{lm}\ftp(u_{k+l}, u_{k+m})
		\label{eq:ECansatzB}
	\end{equation}
	are entropy conservative numerical fluxes with associated entropy fluxes
	\begin{equation}
		F_{k - \frac 1 2} = \sum_{l,m =-z+1}^{2p-z} \Er \Tr A_{lm}\Ftp(u_{k+l}, u_{k+m})
		\label{eq:EFansatzA},
	\end{equation}
	\begin{equation}
		F_{k + \frac 1 2} = \sum_{l,m =-z+1}^{2p-z} B_{lm}\Ftp(u_{k+l}, u_{k+m})
		\label{eq:EFansatzB}
	\end{equation}
	and the same stencil $\sset{k - z, \dots, k+2p-z} = k + Z(z)$ if the coefficients fulfil the relations
	\begin{equation}
		\sum_{l,m = -z}^{2p-z} TA_{lm} = 1,
		\label{eq:Efconres}
	\end{equation}
	\begin{equation}
		\Er \Tr A_{lm} - B_{lm} = 0 , \quad \forall (l, m) \in (Z \setminus\sset{0}) \times (Z \setminus\sset{0}) = N,
		\label{eq:Etaylorrestrict}
	\end{equation}
	\begin{equation}
		\forall l \in Z: A_{ll}= 0 \wedge v_0 = 0.
		\label{eq:Elcfres}
	\end{equation}
	\begin{proof}
		Our proof follows as a generalization of the proof from  \cite{lefloch2002fully}. Some more care has to be taken  as the used fluxes are \textit{not position independent}. 
		The needed compatibility relations are already a subset of the conditions from the previous Lemma \ref{lem:HOBF}. \\ 
		First, one can split the fluxes into components that are build out of linear combinations involving $u_k$ in the flux arguments, and the ones that do not. Therefore, we get 		\begin{equation}
		\begin{aligned}
			f_{k - \frac 1 2} =& \sum_{l,m=-z}^{2p - z} \Er \Tr A_{lm} \ftp(u_{k+l}, u_{k+m}) \\
			=& \sum_{l = -z}^{2p-z} \Er \Tr A_{l0} \ftp(u_{k+l}, u_{k}) &+& \sum_{l = -z}^{2p-z} \Er \Tr A_{0l} \ftp(u_{k}, u_{k+l})\\ 
			&	&+& \sum_{(l, m) \in N} \Er \Tr A_{lm} \ftp(u_{k + l}, u_{k+m})
		\end{aligned}.
		\end{equation}
		Due to condition \eqref{eq:Etaylorrestrict}  in the difference between two fluxes, the contributions from the last line cancel out each other. We further realize that
		\begin{equation}
		\begin{aligned}
			\sum_{l = -z}^{2p-z} A_{0l} + A_{l0} & \overset{\phantom{\eqref{eq:Etaylorrestrict}}}{=} \sum_{l, m = -z}^{2p-z} A_{lm} - \sum_{(l, m) \in N} \Er \Tr A_{lm} 
			\overset{\eqref{eq:Efconres}}{=} 1 -  \sum_{(l, m) \in N} \Er \Tr A_{lm} \\
			&\overset{\eqref{eq:Etaylorrestrict}}{=} 1 - \sum_{(l, m) \in N} B_{lm} 
			\overset{\eqref{eq:Efconres}}= \sum_{l, m = -z}^{2p-z} B_{lm} - \sum_{(l, m) \in N} B_{lm} = \sum_{l = -z}^{2p-z} B_{0l} + B_{l0}
		\end{aligned}
		\end{equation}
		holds due to \eqref{eq:Etaylorrestrict} and \eqref{eq:Efconres}. If we analyze the difference between both fluxes in the semi-discrete scheme formulation, we get 
		\begin{equation*} \label{eq:lcfdiff}
			\begin{aligned}
				\derd{u_k(t)}{t} =&  \frac{f_{k - \frac 1 2} - f_{k+ \frac 1 2}}{\Delta x} \\
				=& \frac{1}{\Delta x} \left( \sum_{l = -z}^{2p} A_{0l}\ftp(u_{k}, u_{k+l}) + A_{l0}\ftp(u_{k+l}, u_{k})
				- B_{0l}\ftp(u_{k}, u_{k+l}) - B_{l0}\ftp(u_{k+l}, u_{k}) \right)\\
				=& \frac{1}{\Delta x} \Bigg( \sum_{l = -z}^{2p} A_{0l}\ftp(u_{k}, u_{k+l}) + A_{l0}\ftp (u_{k+l}, u_{k}) - A_{0l}\ftp(u_{k}, u_{k}) - A_{l0}\ftp(u_{k}, u_{k}) \\ 
				&+ B_{0l}\ftp(u_{k}, u_{k}) + B_{l0}\ftp(u_{k}, u_{k}) - B_{0l}\ftp(u_{k}, u_{k+l}) - B_{l0}\ftp(u_{k+l}, u_{k}) \Bigg).
			\end{aligned}
		\end{equation*}
		Finally, by multiplying with entropy variable $\derd{U}{u}$  from the left previous, we get by elementary calculations 	
		\begin{equation}\label{eq:ecproof}
			\begin{aligned}
			&\derd{U(u_k)}{t}
			=\left\langle \derd{U}{u}(u_k(t)), \derd{u_k}{t} \right \rangle \\
			=&  \frac {1}{\Delta x} \Bigg(\sum_{l = -z}^{2p}\left \langle \derd{U}{u}(u_k), A_{0l}\ftp(u_{k}, u_{k+l}) - A_{0l}\ftp(u_{k}, u_{k}) \right \rangle 
			+  \sum_{l = -z}^{2p}\left \langle \derd{U}{u}(u_k),A_{l0}\ftp(u_{k+l}, u_{k}) - A_{l0}\ftp(u_{k}, u_{k}) \right \rangle\\
			&+ \sum_{l = -z}^{2p}\left \langle \derd{U}{u}(u_k), B_{0l}\ftp(u_{k}, u_{k}) - B_{0l}\ftp(u_{k}, u_{k+l}) \right \rangle 
			+ \sum_{l = -z}^{2p} \left \langle \derd{U}{u}(u_k),  B_{l0}\ftp(u_{k}, u_{k}) - B_{l0}\ftp(u_{k+l}, u_{k}) \right \rangle \Bigg)\\
			=& \frac{1}{\Delta x}\sum_{l = -z}^{2p} \Big( A_{0l}\Ftp(u_{k}, u_{k+l}) - A_{0l}\Ftp(u_{k}, u_{k}) + A_{l0}\Ftp(u_{k+l}, u_{k}) - A_{l0}\Ftp(u_{k}, u_{k}) \\ 
			&  + B_{0l}\Ftp(u_{k}, u_{k}) - B_{0l}\Ftp(u_{k}, u_{k+l}) + B_{l0}\Ftp(u_{k}, u_{k}) - B_{l0}\Ftp(u_{k+l}, u_{k}) \Big ) \\
			=&\frac{1}{\Delta x}\sum_{l = -z}^{2p} \Big( A_{0l}\Ftp(u_{k}, u_{k+l}) + A_{l0}\Ftp(u_{k+l}, u_{k}) 
			 - B_{0l}\Ftp(u_{k}, u_{k+l})- B_{l0}\Ftp(u_{k+l}, u_{k}) \Big )\\
			=& \frac{F_{k-\frac 1 2} - F_{k + \frac 1 2}}{\Delta x}.
			\end{aligned}
		\end{equation}
		Here, we used that we can swap all two-point fluxes for the entropy fluxes $\Ftp(u_l,u_m) - \Ftp(u_m,u_r) = \skp{\derd{U}{u}, \ftp(u_l, u_m) - \ftp(u_m, u_r)}$. These are in turn equal to the difference of the entropy fluxes as given in equation \eqref{eq:EFansatzA} and \eqref{eq:EFansatzB}. Because of condition \eqref{eq:Elcfres} and \eqref{eq:Etaylorrestrict}, all four fluxes are clearly linear combined two-point fluxes in the sense of Definition \ref{def:lcflux}. Entropy conservation is proven as the last equation is the sought after per cell entropy equality.  
	\end{proof}
\end{theorem}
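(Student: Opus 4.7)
The plan is to mirror the argument in Theorem \ref{thm:LMR}, but first split each of $f_{k-\frac12}$ and $f_{k+\frac12}$ into two parts according to whether the underlying two-point flux $\ftp(u_{k+l},u_{k+m})$ involves the cell value $u_k$ or not. The ``uncoupled'' part, indexed by $N=(Z\setminus\{0\})^2$, agrees between the two fluxes thanks to \eqref{eq:Etaylorrestrict}, so it cancels in $f_{k-\frac12}-f_{k+\frac12}$. What remains are the ``coupled'' terms, i.e.\ the row/column entries $A_{0l},A_{l0}$ of the coefficient matrix and the corresponding entries of $B$.

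First I would extract from \eqref{eq:Efconres} and \eqref{eq:Etaylorrestrict} the row-sum identity $\sum_l (\Er\Tr A_{0l}+\Er\Tr A_{l0}) = \sum_l (B_{0l}+B_{l0})$, which is essentially the short computation already flagged in the excerpt. This lets me insert the consistent quantity $\ftp(u_k,u_k)=f(u_k)$ with coefficients that match on the two sides, rewriting every surviving contribution as an increment $\ftp(u_k,u_{k+l})-\ftp(u_k,u_k)$ or $\ftp(u_{k+l},u_k)-\ftp(u_k,u_k)$.

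Next I would contract the semi-discrete derivative $\derd{u_k}{t} = (f_{k-\frac12}-f_{k+\frac12})/\Delta x$ against the entropy variable $v_k = \derd{U}{u}(u_k)$. Combining Tadmor's two-point entropy condition $\scp{v_b-v_a}{\ftp(u_a,u_b)} = \psi_b-\psi_a$ with the definition $\Ftp = \tfrac12(v_a+v_b)\ftp - \tfrac12(\psi_a+\psi_b)$ yields the key identity
\[
\scp{v_k}{\ftp(u_k,u_{k+l}) - \ftp(u_k,u_k)} = \Ftp(u_k,u_{k+l}) - \Ftp(u_k,u_k),
\]
the $\psi_k$ contributions cancelling cleanly, and analogously for the mirrored pair $\ftp(u_{k+l},u_k)-\ftp(u_k,u_k)$. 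Applying this identity term by term converts every remaining two-point flux into its two-point entropy flux counterpart, while the injected $\Ftp(u_k,u_k)=F(u_k)$ terms cancel by the same row-sum equality used above.

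Finally, reassembling the terms produces exactly $(F_{k-\frac12}-F_{k+\frac12})/\Delta x$, which is the desired per-cell semi-discrete entropy balance; condition \eqref{eq:Elcfres} ensures that the original sums never contain $\ftp(u_k,u_k)$ and certifies that all four fluxes are legitimate linear combination fluxes in the sense of Definition \ref{def:lcflux}. The main obstacle I anticipate is the bookkeeping of the add/subtract step: the $\ftp(u_k,u_k)$ corrections introduced to form the increment combinations must appear with matching coefficients on the $A$- and $B$-sides, and this matching is precisely what \eqref{eq:Efconres} combined with \eqref{eq:Etaylorrestrict} delivers. Once that coefficient equality is in place, the remainder of the proof is a term-by-term application of Tadmor's two-point identity followed by a rearrangement.
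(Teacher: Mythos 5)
Your proposal is correct and follows essentially the same route as the paper's proof: the same split into terms that do and do not involve $u_k$, the same cancellation of the $N$-indexed block via \eqref{eq:Etaylorrestrict}, the same row-sum identity from \eqref{eq:Efconres} used to insert $\ftp(u_k,u_k)$ with matching coefficients, and the same term-by-term swap of two-point fluxes for two-point entropy fluxes. The only (welcome) difference is that you spell out the derivation of the key identity $\scp{v_k}{\ftp(u_k,u_{k+l})-\ftp(u_k,u_k)}=\Ftp(u_k,u_{k+l})-\Ftp(u_k,u_k)$ from Tadmor's condition, which the paper merely asserts.
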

The two results given before can be used to construct entropy conservative high-order fluxes near the boundary of a domain. The general approach is to find appropriate linear combinations. The stencil does not have to be a centered stencil as in  \cite{lefloch2002fully}. It can be a stencil that is shifted to the left or right of the edges where we would like to determine a numerical flux. This is also pictured in Figure \ref{fig:FluxStencil}.
The approach can be summarized as follows: \\

 One starts by using a standard flux like the one proposed by LeFloch, Mercier and Rhode for inner cell interfaces. If a $2p$ order accurate flux with a $2p$ stencil is used, this flux will be suitable up to interface $N-p+\frac 1 2$ if $N$ cells are used. The calculation of $\derd{u_k}{t}$ uses a stencil of $2p+1$ points for an inner cell and this is why we will assume that our outer cells use also a $2p+1$ point wide stencil. We set $k = N-p$ and add a ghost cell filled with the boundary value on the right side with number $N+1$. One concludes that the fluxes $f_{k+1+\frac 1 2}, \dots, f_{k + p + \frac 1 2}$ should all use the same $2p+1$ wide stencil whose right most point is the ghost cell $N+1$. One therefore uses Lemma \ref{lem:HOBF} to construct $f_{k+m+\frac 1 2}$ out of $f_{k+m-\frac 1 2}$ for $m \in \sset{1, \dots, p}$. This approach is given a formal description in the following theorem. 
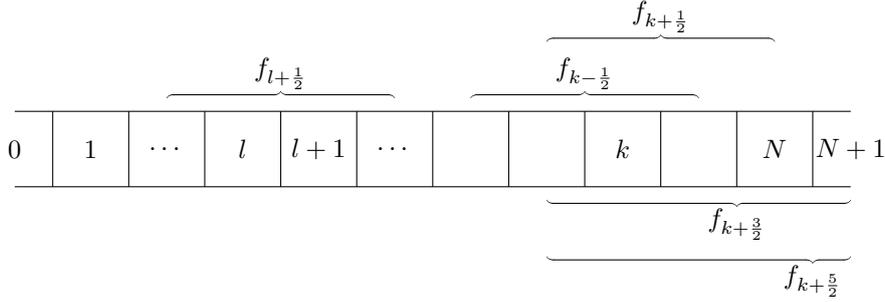
\begin{figure}
\centering
	\begin{tikzpicture}
		\draw(-5.5, 0) -- (5.5, 0);
		\draw(-5.5, 1) -- (5.5, 1);
		
		\draw(-5, 0) -- (-5, 1);
		\draw (-4, 0) -- (-4, 1);
		\draw (-3, 0) -- (-3, 1);
		\draw (-2, 0) -- (-2, 1);
		\draw (-1, 0) -- (-1, 1);
		\draw (0, 0) -- (0, 1);
		\draw (1, 0) -- (1, 1);
		\draw (2, 0) -- (2, 1);
		\draw (3, 0) -- (3, 1);
		\draw (4, 0) -- (4, 1);
		\draw (5, 0) -- (5, 1);
		
		\draw (-5.5, 0.5) node {$0$};
		\draw (-4.5, 0.5) node {$1$};
		\draw (4.5, 0.5) node {$N$};
		\draw (5.5, 0.5) node {$N+1$};
		\draw (2.5, 0.5) node {$k$};
		
		\draw (-2.0, 1.5) node {$f_{l+\frac 1 2}$};
		\draw (-2.5, 0.5) node {$l$};
		\draw (-1.5, 0.5) node{$l+1$};
		\draw [decorate, 
		decoration = {calligraphic brace,
			raise=5pt,
			aspect=0.5}] (-3.5,1) --  (-0.5,1);
		
		\draw (-3.5, 0.5) node {$\dots$};
		\draw (-0.5, 0.5) node {$\dots$};
		
		\draw (2.0, 1.5) node {$f_{k-\frac 1 2}$};
		\draw [decorate, 
		decoration = {calligraphic brace,
			raise=5pt,
			aspect=0.5}] (0.5,1) --  (3.5,1);
		\draw (3.0, 2.25) node {$f_{k+\frac 1 2}$};
		\draw [decorate, 
		decoration = {calligraphic brace,
			raise=5pt,
			aspect=0.5}] (1.5,1.75) --  (4.5,1.75);

		\draw (4.0, -0.5) node {$f_{k+\frac 3 2}$};
		\draw [decorate, 
		decoration = {calligraphic brace,
			raise=5pt,
			aspect=0.375}] (5.5,0.0) --  (1.5,0.0);
		
		\draw (5.0, -1.25) node {$f_{k+\frac 5 2}$};
		\draw [decorate, 
		decoration = {calligraphic brace,
			raise=5pt,
			aspect=0.125}] (5.5,-0.75) --  (1.5,-0.75);
	\end{tikzpicture}
	\caption{Depiction of the domain. The cells on the boundary, the inter cell fluxes and their respective stencils are drawn for the right cell boundary. The inner flux is the $4th$ order accurate flux consisting of the linear combination given by LeFloch, Mercier and Rhode applied to a symmetric entropy conservative flux. }
	\label{fig:FluxStencil}
\end{figure}

\begin{theorem}[Boundary aware entropy conservative fluxes]
	There exist entropy conservative fluxes of order $q \leq 2p-1$ suitable for usage with the inner fluxes from Theorem \ref{thm:LMR}.
	\begin{proof}
		The proof uses the results from Lemma \ref{lem:HOBF} and Theorem \ref{lem:ECF}. Given a domain with $N$ inner cells we can use for inner cells, as depicted in Figure \ref{fig:FluxStencil}, the usual centered linear combination from  \cite{lefloch2002fully}. The last interface using this flux to the right is $k + m  - \frac 1 2$ with $k = N-p$ and $m = 1$. We now construct from this flux, the flux on the next cell interface near the boundary by
		\begin{enumerate}
			\item Artificially enlarging the stencil to the right using $\Er$, i.e. still the same flux is represented by $\Er A$.
			\item Convert the matrix representation $\Er A$ of $f_{k + m - \frac 1 2}$ from Lemma \ref{lem:matrixflux} to be based around $ k + m + \frac 1 2$ using the operator $\Tr$, i.e. the same flux is now represented by $\Tr \Er A$.
			\item Use Lemma \ref{lem:HOBF} to calculate a high-order flux for $x_{k + \frac 1 2}$ given by the matrix $B$. This boils down to calculating the vector $v$, giving the difference between both matrices.
		\end{enumerate}
		This procedure is repeated from point $2$ with $B$ in place of $\Er A$ and $m := m+1$ until the last flux was constructed. Clearly for all fluxes holds Theorem \ref{lem:ECF} implying a per cell entropy inequality. 
	\end{proof}
\end{theorem}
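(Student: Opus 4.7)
The plan is to follow the iterative procedure sketched just before the statement. Starting from the last interface at which the symmetric LMR flux from Theorem \ref{thm:LMR} still fits inside the domain, I march one interface at a time toward the physical boundary, producing each new boundary-aware flux from the previous one while retaining a common widened $(2p+1)$-point stencil that reaches into a single ghost cell. Fix the right boundary (the left is symmetric). With $N$ inner cells and a $2p$-point inner stencil, the LMR flux is usable up to and including the interface $N - p + \tfrac{1}{2}$; by Lemma \ref{lem:matrixflux} I represent it as a coefficient matrix $A$ with $z = p$ on the index set $Z = \{-p+1,\dots,p\}$.

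To build the flux at the next interface on the common widened stencil that now includes the ghost cell $N+1$, I first enlarge the index set by $\Er$ and re-center the description by $\Tr$, so that $\Tr \Er A$ describes the very same flux but relative to the new midpoint. Lemma \ref{lem:HOBF} then determines a coefficient matrix $B$ such that the pair $(\Tr \Er A,\, B)$ forms a linear combination discretization of order $2p-1$; by \eqref{eq:taylorrestrict}, $B$ coincides with $\Tr \Er A$ outside the zeroth row and column, so the only genuine unknowns are the $2p$ nonzero entries of the vector $v$ appearing in \eqref{eq:forderres}. Iterating — replace $\Tr \Er A$ by $B$, increment $m$, repeat — produces the remaining $p$ fluxes up to the boundary interface adjacent to the ghost cell. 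Entropy conservation at each step is immediate from Theorem \ref{lem:ECF}, whose hypotheses \eqref{eq:Efconres}--\eqref{eq:Elcfres} form a subset of the restrictions already enforced when solving the system in Lemma \ref{lem:HOBF}.

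The main obstacle is checking that the linear system for $v$ is uniquely solvable at every iteration, independently of the current shift. Fixing $v_0 = 0$ via \eqref{eq:lcfres}, there remain $2p$ unknowns $\{v_m : m \in Z \setminus \{0\}\}$ and $2p$ equations $\sum_{m \neq 0} v_m m^j = 2\delta_{j1}$ for $j = 0,\dots,2p-1$; the coefficient matrix is a Vandermonde in the $2p$ distinct integer nodes of $Z \setminus \{0\}$ and is therefore invertible regardless of the current shift $z$, so each iteration can always be completed. Consistency \eqref{eq:fconres} is inherited automatically since the $j = 0$ equation forces $\sum_m v_m = 0$, which is precisely the identity used inside the proof of Lemma \ref{lem:HOBF} to pass consistency from $A$ to $B$. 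The order bound $q \leq 2p - 1$ rather than $2p$ is intrinsic to the non-centered setting: without the symmetry of a centered stencil, the even Taylor coefficients of the two-point flux no longer cancel, so all $2p$ moment conditions in \eqref{eq:forderres} must be imposed, instead of only the $p$ odd ones exploited in the proof of Theorem \ref{thm:LMR}.
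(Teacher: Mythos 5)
Your proposal follows the same iterative construction as the paper's proof: start from the last admissible centered LMR flux, repeatedly apply $\Er$, $\Tr$, and Lemma \ref{lem:HOBF} to march the common widened stencil into the ghost cell, and invoke Theorem \ref{lem:ECF} at each step for entropy conservation. The one substantive addition is your Vandermonde argument showing that the $2p\times 2p$ moment system \eqref{eq:forderres} for $v$ (with $v_0=0$ fixed) is uniquely solvable at every shift $z$ --- a solvability check the paper's proof takes for granted when it says the construction ``boils down to calculating the vector $v$'' --- so your write-up is, if anything, more complete.
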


The coefficient matrices in the theorem above could be greatly simplified, but we are keeping them this way. It is therefore possible to couple this linear combination with as many other high-order fluxes in the middle as possible. All needed restrictions were instead given as restrictions on the structure of the matrices.

\begin{remark}[Order of Accuracy at the Boundary]
	Even if the inner order is $2p$ an outer order of accuracy of $2p-1$ is high enough to ensure a global order of $2p$, as for summation-by-parts operators \cite{Gustafsson1975Convergence, Gustafsson1981Convergence}. This will be also demonstrated in our numerical experiments.
\end{remark}

\subsection{Inflow, outflow and reflective boundary conditions using boundary aware fluxes} \label{ssec:bcs}
We will now outline how these boundary aware fluxes can be used to design boundary conditions for boundary types that are relevant for simulations of compressible inviscid flow, i.e. inflow, outflow and reflective boundary conditions \cite{Toro1997Riemann}.
\begin{enumerate}
	\item Inflow boundaries:	If an inflow boundary should be implemented, we  add a ghost cell outside of the domain, as in figure \ref{fig:FluxStencil}, and copy the boundary value into this inflow cell. The fluxes next to this interface should be the boundary fluxes that are designed in this publication.
	\item Outflow boundaries: If an outflow boundary should be implemented, we  add a ghost cell outside of the domain, as in figure \ref{fig:FluxStencil}, and add in the value of the last cell in the grid into this ghost cell. The fluxes next to this interface should be the boundary fluxes that are designed in this publication.
	\item Reflective boundaries: For a reflective boundary condition the most important property is the symmetry of the boundary condition - incoming particles are reflected of the boundary as if they came from the other side of the boundary with the same tangential speed, but opposite normal speed as the incoming particle before the reflection. Therefore the problem is equivalent to an enlargement of the domain, and the cells on the opposite of the boundary are initialized with the same pressure and density, but appropriately mirrored speeds. In this case, the fluxes for an interior cell interface of the domain are used.
\end{enumerate} 

\subsection{Entropy dissipative fluxes}\label{se_entropy_dissipative}
After the development  of high-order entropy conservative fluxes  that can also be used at non-periodic boundaries, entropy dissipative fluxes will be designed before we focus on the convergence properties of our FV/FD schemes. To construct such fluxes, we need an additional generalization of the linear combination defined in Definition \ref{def:lcflux}.  
\begin{definition}[General linear combined flux]\label{de_general_flux}
	Let $\fnum_{l, m}(u_L,u_R)$ be a family of two-point fluxes and $A  \in \R^{\iset}$  a matrix
	satisfying $ A_{ll} = 0, \forall  l \in Z = \{-z+1, \dots, 2p-z \} $.
	A numerical flux given by 
	\begin{equation}
		f(u_{k-z+1}, \dots, u_{k+2p-z}) := \sum_{l, m = -z+1}^{2p-z} A_{lm} \fnum_{l, m}(u_{k+l}, u_{k+m})
		\label{eq_dcombination}	
	\end{equation}
	is called  general combined  numerical flux with stencil $k + Z = \{k-z+1, \dots, k+2p-z\}$. 
	\end{definition}
 The value $z$ shifts the stencil of the designed  flux. We obtain for  $z = p$ a  symmetric stencil. As before, the set $Z$ holds the offsets of the arguments for the flux from the grid point $k$. Therefore, it holds $Z \subset \Z$. The difference between the linear combinations \ref{def:lcflux} and \ref{de_general_flux} lies in the selection of the baseline fluxes. In above definition, the flux can depend on the indices. Using a combination of dissipative and entropy conservative two-point fluxes allows us to steer the dissipation of the combined flux.
 
\begin{lemma}
Let $\ftp$ be an entropy conservative, symmetric and smooth two-point flux and let further $A, B$ be two matrices inducing an entropy conservative flux combination as designed in Theorem \ref{lem:ECF} for $\ftp$.  Additionally, let $g$ be an entropy dissipative two-point flux with entropy flux $G$ and $\alpha \in [0,1]$  and the matrices $A$ and $B$ satisfy
\[
\begin{aligned}
	A_{0,1} \geq 0, \quad A_{1,0} \geq 0,  \quad	B_{0,1} \geq 0, \quad B_{1,0} \geq 0.\\
	\end{aligned}
\]
Using the flux family 	\begin{equation}\label{eq_applied_flux}
		f_{l,m, \alpha}(u_L, u_R) = \begin{cases} \alpha g(u_L, u_R) + (1-\alpha) h(u_L, u_R)&  (m, l) = (0, 1)    \\
																	\alpha g(u_R, u_L) + (1-\alpha) h(u_L, u_R) &  (m, l) = (1, 0)  \\
																	h(u_L, u_R) & \text{else} \\
																	\end{cases}
	\end{equation}	
	as our base flux family in  \eqref{eq_dcombination} results in a  general linear combined flux \eqref{eq_dcombination} that
	is entropy dissipative with a numerical entropy flux defined through the family of two-point entropy fluxes
	\[
			F_{l,m, \alpha}(u_L, u_R) = \begin{cases} \alpha G(u_L, u_R) + (1-\alpha) H(u_L, u_R),&  (m, l) = (0, 1)   \\
			\alpha G(u_R, u_L) + (1-\alpha) H(u_L, u_R), &  (m, l) = (1, 0) \\
			H(u_L, u_R), & \text{else.} \\
		\end{cases}
	\]
	\begin{proof}
	The proof follows the steps of the proof of Theorem  \ref{lem:ECF}. Instead of the equality in  equation \eqref{eq:ecproof}, we obtain the inequalities where the entropy dissipative fluxes are entered for the products of entropy variables and fluxes. This is possible due to 
		\[
			\skp{\derd U u(u_m), f_{l, m, \alpha}(u_l, u_m) - f_{l, m, \alpha}(u_m, u_m)} = \skp{\derd U u, f_{l, m, \alpha}} \leq F_{l, m, \alpha}(u_l, u_m) - F_{l, m, \alpha}(u_l, u_m) .
		\]
		Consult also \cite{klein2022using}[Theorem 3.1]. The rest follows analogously. 
		\end{proof}
	\end{lemma}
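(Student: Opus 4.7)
The plan is to mimic the proof of Theorem \ref{lem:ECF} step by step, allowing equalities to degrade into inequalities wherever the dissipative flux $g$ replaces the conservative flux $\ftp$. The index bookkeeping (splitting every flux into the subsum with at least one argument equal to $u_k$ and the bulk subsum with neither argument equal to $u_k$) is unaffected: the bulk contributions of $f_{k-1/2}$ and $f_{k+1/2}$ still cancel in their difference by the hypothesis $\Er \Tr A_{lm} = B_{lm}$ for $(l,m) \in N = (Z\setminus\{0\})\times (Z\setminus\{0\})$, because those entries are still filled with the unmodified $\ftp$.

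Contracting the remaining boundary-type terms with the entropy variable $v_k = \nabla U(u_k)$ and using the Tadmor identity $\skp{v_k, \ftp(u_k, u_{k+l}) - \ftp(u_k, u_k)} = \Ftp(u_k, u_{k+l}) - \Ftp(u_k, u_k)$ converts every summand built from $\ftp$ into the corresponding $\Ftp$ difference, exactly as in display \eqref{eq:ecproof}. For the four summands corresponding to the matrix entries $A_{0,1}, A_{1,0}, B_{0,1}, B_{1,0}$, whose base flux has been replaced by $\alpha g + (1-\alpha)\ftp$ with the argument conventions specified in the statement, the analogous identity becomes the dissipation inequality $\skp{v_k, g(\cdot,\cdot) - g(u_k, u_k)} \leq G(\cdot,\cdot) - G(u_k, u_k)$ (see also \cite{klein2022using}), which then yields the one-sided estimate $\skp{v_k, f_{l,m,\alpha}(\cdot) - f_{l,m,\alpha}(u_k, u_k)} \leq F_{l,m,\alpha}(\cdot) - F_{l,m,\alpha}(u_k, u_k)$ by taking a convex combination with the $\ftp$-equality.

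The non-negativity hypotheses $A_{0,1}, A_{1,0}, B_{0,1}, B_{1,0} \geq 0$ are precisely what makes these four inequalities combine correctly: multiplying by a non-negative scalar preserves the direction of $\leq$, so after summation over all positions the quantity $F_{k-1/2} - F_{k+1/2}$, formed with $F_{l,m,\alpha}$ at the four special positions and $\Ftp$ elsewhere as prescribed in the statement, dominates $\Delta x \cdot \derd{U(u_k)}{t}$. This is exactly the per-cell entropy decrease claim.

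The main technical obstacle I anticipate is pinning down the correct direction of the dissipation inequality under the normalization conventions used here: the family definition $f_{l,m,\alpha}$ swaps the arguments of $g$ at $(m,l) = (1,0)$ in order to align both special contributions with a single argument order for $g$ (and correspondingly a single left/right orientation of the test quantity $v_R - v_L$), and this swap must be checked to produce the Tadmor inequality with the sign that is then preserved by the non-negative coefficient assumption. Once this alignment is verified, the rest of the proof is a routine transcription of the entropy-conservative computation in Theorem \ref{lem:ECF}, replacing equalities by inequalities at exactly the four positions where $g$ enters.
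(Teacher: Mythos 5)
Your proposal is correct and follows essentially the same route as the paper's own (very terse) proof: repeat the argument of Theorem \ref{lem:ECF}, note that the bulk terms with indices in $N$ still cancel since they carry the unmodified conservative flux $\ftp$, replace the Tadmor equality by the Tadmor inequality at the four special positions, and use the non-negativity of $A_{0,1},A_{1,0},B_{0,1},B_{1,0}$ so that multiplication by these coefficients preserves the inequality. You are in fact somewhat more careful than the paper on the one genuinely delicate point --- that the argument swap in the definition of $f_{l,m,\alpha}$ must be checked to orient the slot-level dissipation inequality consistently with the sign with which each term enters $f_{k-\frac 1 2}-f_{k+\frac 1 2}$ and with which entropy variable contracts it --- a verification the paper omits entirely (its displayed inequality even has a typo making the right-hand side vanish).
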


The application of entropy dissipative fluxes results in entropy dissipative FV/FD schemes. These are needed if discontinuous solutions will be approximated. 
	
	\begin{remark}
		The condition on the positivity of the matrix elements of $A$ and $B$ in the previous theorem is needed to ensure the dissipativity of the constructed flux. A negative entry in the corresponding entries would produce antidissipative effects in the flux $f_{k + \frac 1 2}$, even if a dissipative $f(u_L, u_R)$ is used. All matrices $A$ used for the construction of linear combined fluxes in this publication have this property, as can be seen in the appendix.
		\end{remark}


\section{Proof of the Lax-Wendroff theorem} \label{se:lwthm}
Our schemes were built in the classical FV/FD setting and it is therefore not astonishing that one is able to prove a Lax-Wendroff theorem for our schemes. 
The validation of a Lax-Wendroff theorem ensures that our approximated solutions converge to the weak entropy solution under additional compactness assumptions. Rather than demonstrating directly a Lax-Wendroff theorem for our schemes, 
we follow a different approach. 
In  \cite{shi2018local}, a rigorous definition for local conservation is given and sufficient criterias are named. If such conditions are fulfilled 
a Lax-Wendroff theorem follows directly. Therefore, we only check such criterias to ensure Lax-Wendroff  \cite{LW1960}.
Following  \cite{shi2018local}, we define local conservation of  a numerical scheme 
\begin{equation}\label{scheme_gen}
\frac{u_h(\cdot, t^{n+1})-u(\cdot, t^n)} {\Delta t^n}=L(u_h(\cdot, t^n))
\end{equation}
with space discretization $L$, grid size $h$,  numerical solution $u_h$, and time discretization $0=t^0<t^1 <\cdots$ with $t^n\to \infty$ as $n\to \infty$ and time step $\Delta t_n=t^{n+1}-t^n$ as follows:
\begin{definition}\label{def_conservative}
A numerical scheme of the form \eqref{scheme_gen} is locally conservative if there are conserved quantities and fluxes, both of which locally depend on the numerical solution and satisfy: 
\begin{enumerate}
\item They fulfill additionally: 
\begin{equation}\label{scheme_gen_2}
\frac{\bar u_k^{n+1} -\bar u_k^{n} } {\Delta t^n} +\frac{1}{|I_k|} \left(g_{k+\frac 1 2}(u^t_h) -g_{k-\frac 1 2}(u^t_h)  \right)=0,
\end{equation}
where $\bar u_k^{n}$ are generalized locally conserved quantities and $g_{k\pm\frac 12}$ are generalized fluxes. They depend on $u_h^n(B^c_k)$ with $B^c_k=\left\{ x\in \R| |x-w_k|< ch\right\}$ where $w_k$ is the midpoint of the interval $I_k$, and $c \geq 1$ is independent of the mesh size $h$. 
\item \emph{Consistency:} If $u_h^n(x) \equiv u$ constant $\forall x \in B^c_k$, we have $\bar u_k^{n+1}=u$ and $g_{k+\frac 1 2}\left(u^t_h \right)= f(u)$.
 \item \emph{Boundedness:} We get 
 \begin{equation}
 \begin{aligned}
 \left| \bar u_k^{n}- \bar v_k^{n}   \right| \leq C \norm{u_h^n-v_h^n}_{L^{\infty}(B_k)},\\
  \left|g_{k+\frac 1 2}\left(u^t_h \right)- g_{k+\frac 1 2}\left(v^t_h \right)  \right| \leq C \norm{u_h^n-v_h^n}_{L^{\infty}(B_k)},
 \end{aligned}
 \end{equation}
 for two functions ($u_h$ and $v_h$) in the numerical solution space. 
 \end{enumerate}
\end{definition}
It was proven in  \cite{shi2018local}, that a Lax-Wendroff type theorem is valued for local conservative methods\footnote{If additionally the numerical solutions are TV bounded. }. Therefore, we have only to check the conditions of Definition \ref{def_conservative}
for our discrete scheme from Section \ref{se:bdflux}. However, the notation differs slightly and we give the translation rules in the following part. 

Our conserved quantities are the values of $u$ at the middle points of the cells that can be also interpreted as mean values of the solution in the cell. It is 
$
	\bar u^n_h = u^n_k.
$
We will use our numerical fluxes as cell interface fluxes 
\[
	g_{k+\frac 1 2}\left(u^t_h\right) := \fnum_{{k+\frac 1 2}}\left(u_{k-p+1}, \dots, u_{k+p}\right)
\]
and their domain of dependence $B_k^c$ is given by $B_k^c = \{x \in \R| |x - w_k| \leq ch \}$ with  $c =2 p$, where $2p$ is the stencil radius of our flux, i.e. the biggest width on which our high-order flux depends. Conservation is satisfied by definition since our scheme was designed in conservative form. 
What remains is the boundedness of the quantities and fluxes. 
Assume there are two approximate solutions $u^t_h$ and $v^t_h$ given satisfying $\norm{u^t_h - v^t_h}_{L^\infty} \leq \infty$. 
The mean values are clearly bounded by 
\[
	\abs{\bar u^t_k - \bar v^t_k} \leq \frac{1}{\mu(I_k)}\int_{I_k} \abs{u^t_h(x) - u^t_h(x)} \intd x  \leq \frac{\mu(I)}{\mu(I)} \norm{u^t_h - v^t_h}_{L^\infty} \leq \infty, 
\] where $\mu(I_k)$ is the Lebesgue measure of interval $I_k$.

To proof the boundedness of the fluxes, we use the Lipschitz continuity
and the Lipschitz bound $L_{GT}$ of the underlying convex combined baseline flux \eqref{eq_applied_flux}, cf. \cite[Theorem 3.1]{klein2022using}. This assures us, that every flux in our family of fluxes $f_{l, m}$ is Lipschitz continuous, where $L_{l, m}$ shall be the Lipschitz constant for flux $(l, m)$ of our family.
Let now $k$ be fixed. Then follows
\begin{equation}
\begin{aligned}
	\abs{g_{k+\frac 1 2}(u^t_h) - g_{k+\frac 1 2}(v^t_h)} 
	& \leq \sum_{l, m} \abs{A_{l, m}}\norm{f_{l, m, \alpha}(u^n_{k+l}, u^n_{k+m}) - f_{l,m, \alpha}(v^n_{k+l}, v^n_{k+m})} \\
	&\leq  \sum_{l, m} \abs{A_{l, m}} L_{l, m} (2p+1)  \norm{u^t_h - v^t_h}_\infty.
\end{aligned}
\end{equation}
 Please note that we used the equivalence of norms here, as only $2p+1$ arguments are entered into our fluxes and therefore all norms on the space of their arguments are equivalent. 
Therefore, all the 
conditions of Definition  \ref{def_conservative} are fulfilled by our entropy conservative/dissipative FV schemes and we have: 

\begin{theorem}[Lax-Wendroff]
	Assume the  numerical solution $u_h^n$ of our discrete scheme satisfies
	\[
		h \sum_k \max_{x \in B_k}\abs{u^n_h(x) - u^n_h(w_j)} \to 0, \text{ as } h \to 0, \quad \forall n \geq 0.
	\]
	If $u_h$ converges uniformly almost everywhere to some function $u$ as $\Delta t$, $h \to 0$, then  $u$ is  a weak solution of the conservation law \eqref{eq_con}. 
\end{theorem}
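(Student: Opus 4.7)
The plan is to invoke the general Lax--Wendroff-type theorem of \cite{shi2018local}, which applies to any scheme that is locally conservative in the sense of Definition \ref{def_conservative} and whose numerical solutions satisfy a TV-type bound. The task then reduces to: (a) matching the semi-discrete finite-volume/finite-difference scheme from Section \ref{se:bdflux} with the abstract framework of \cite{shi2018local}, (b) verifying the three conditions of Definition \ref{def_conservative}, and (c) observing that the regularity hypothesis stated in the theorem plays the role of the TV-boundedness assumption used in \cite{shi2018local}.

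For the matching, I would identify the generalised conserved quantities with the cell values $\bar u_k^n := u_k^n$ and the generalised fluxes with our numerical fluxes $g_{k+\frac{1}{2}}(u_h^t) := \fnum_{k+\frac{1}{2}}(u_{k-p+1},\dots,u_{k+p})$, so that the semi-discrete update \eqref{eq_semidiscrete}, after any consistent time stepping, rewrites in the form \eqref{scheme_gen_2}. The domain of dependence is $B_k^c$ with $c = 2p$, since the widest stencil involved covers $2p+1$ cells around the interface and is independent of the mesh size.

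Next I would verify the three requirements of Definition \ref{def_conservative}. Consistency follows from a direct computation: if $u_h^n \equiv u$ on $B_k^c$, then every baseline two-point flux $\ftp(u,u)$ reduces to $f(u)$ by consistency of $\ftp$, and the normalisations $\sum_{l,m} A_{lm} = 1$ supplied by \eqref{eq:fconres} and \eqref{eq:Efconres} force $g_{k+\frac{1}{2}}(u_h^t) = f(u)$; the same argument covers the entropy-dissipative family \eqref{eq_applied_flux}, whose members also collapse to $f$ on the diagonal. For boundedness, I would invoke Lipschitz continuity of every member of the flux family $f_{l,m,\alpha}$ together with the finite stencil size; summing $|A_{lm}|L_{l,m}$ over $(l,m) \in Z^2$ then delivers the displayed bound in terms of $\norm{u_h^n - v_h^n}_{L^\infty(B_k)}$. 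Boundedness of the averaged quantities is immediate from Jensen/$L^\infty$ inequalities on $I_k$.

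The step I expect to be most delicate is the Lipschitz bookkeeping for the entropy-dissipative case: the convex combination parameter $\alpha$ in \eqref{eq_applied_flux} may itself be chosen adaptively (as in \cite{klein2022using}), and one must check that the Lipschitz constants $L_{l,m}$ are uniform in $\alpha \in [0,1]$ and finitely many across the family. Granting this uniformity and using the additional regularity of $u_h^n$ postulated in the theorem, all three conditions of Definition \ref{def_conservative} hold, and the theorem of \cite{shi2018local} then directly yields that any uniform a.e.\ limit $u$ of $u_h$ is a weak solution of \eqref{eq_con}.
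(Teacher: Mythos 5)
Your proposal matches the paper's argument essentially step for step: identifying the cell values and the linear-combination fluxes with the generalized conserved quantities and fluxes of Definition~\ref{def_conservative}, taking $c = 2p$ for the domain of dependence, verifying boundedness via the $L^\infty$ bound on the means and the Lipschitz continuity of the flux family $f_{l,m,\alpha}$ (summed against $\abs{A_{lm}}$ over the finite stencil), and then invoking the abstract theorem of \cite{shi2018local}. Your explicit check of the consistency condition via the normalization $\sum_{l,m} A_{lm} = 1$ and your remark on uniformity of the Lipschitz constants in $\alpha$ are points the paper leaves implicit (deferring to \cite{klein2022using}), but the route is the same.
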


\section{Numerical Tests} \label{se_NT}
\subsection{Numerical Tests of the Boundary Fluxes}

\begin{figure}[H]
	\begin{subfigure}{0.45\textwidth}
		\includegraphics[width=\textwidth]{./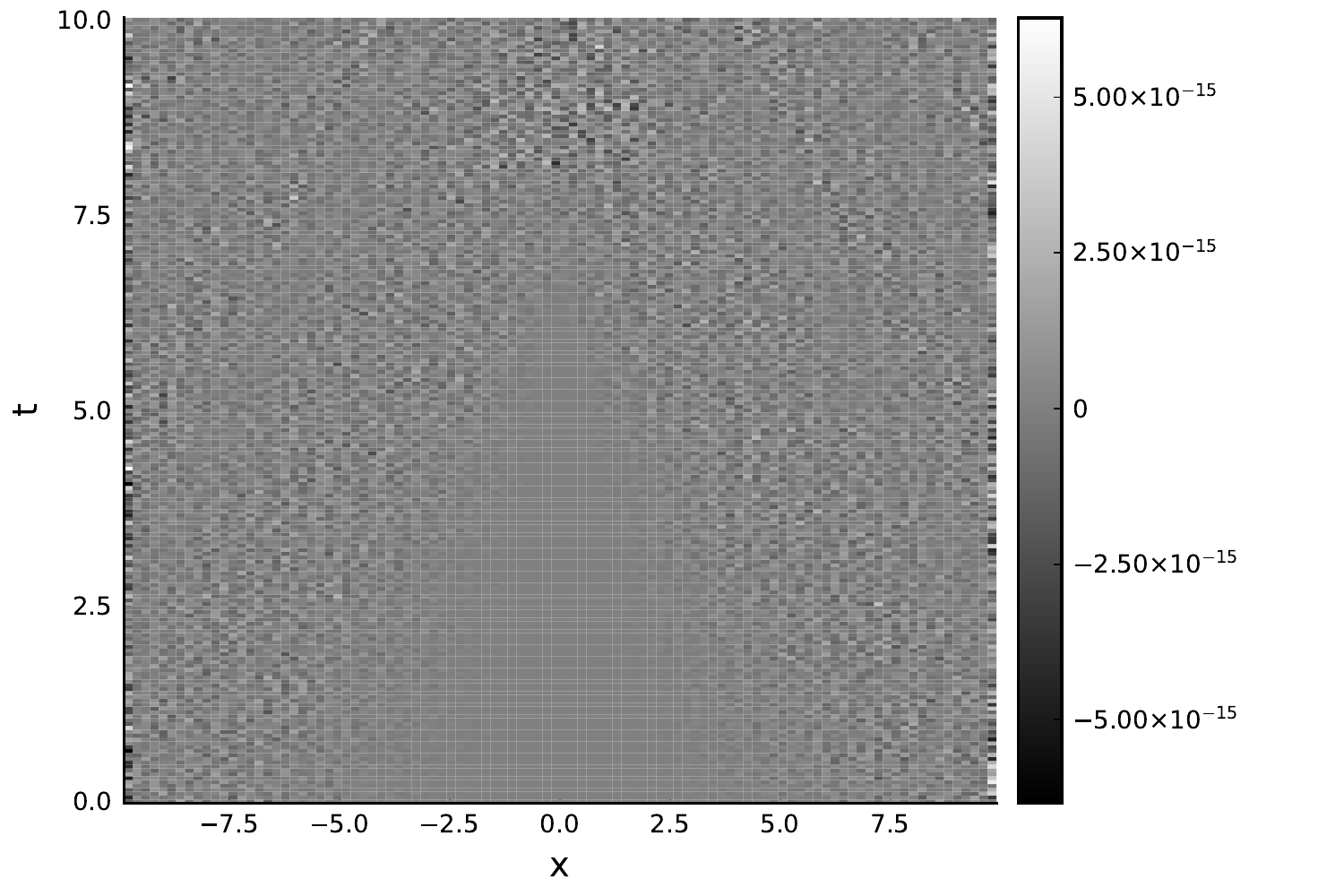}
		\caption{Per cell entropy equality violation up to $T=10$.}
	\end{subfigure}
	\hfill
	\begin{subfigure}{0.45\textwidth}
		\includegraphics[width=\textwidth]{./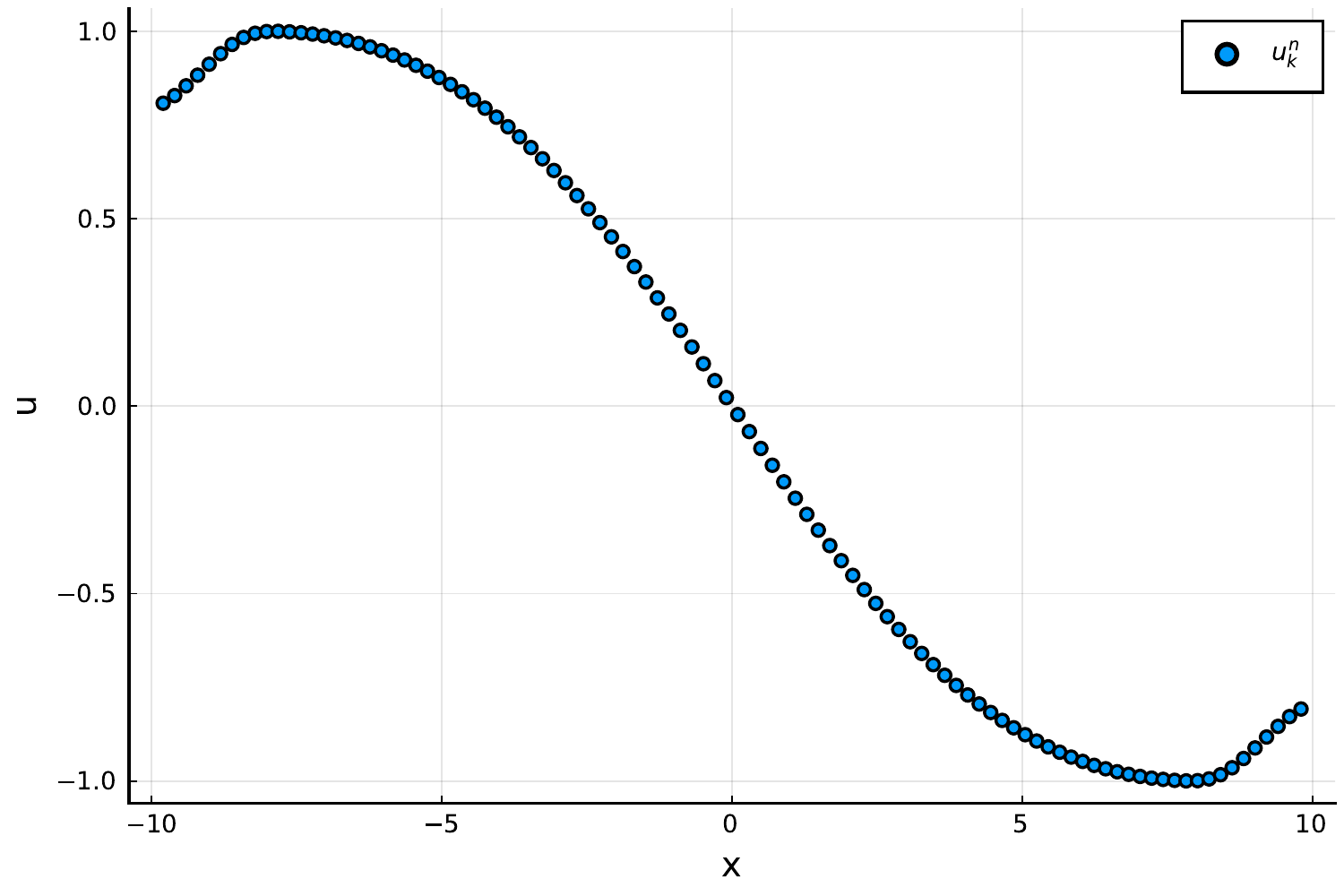}
		\caption{Solution at $T = 2$}
	\end{subfigure}
	\begin{subfigure}{0.45\textwidth}
		\includegraphics[width=\textwidth]{./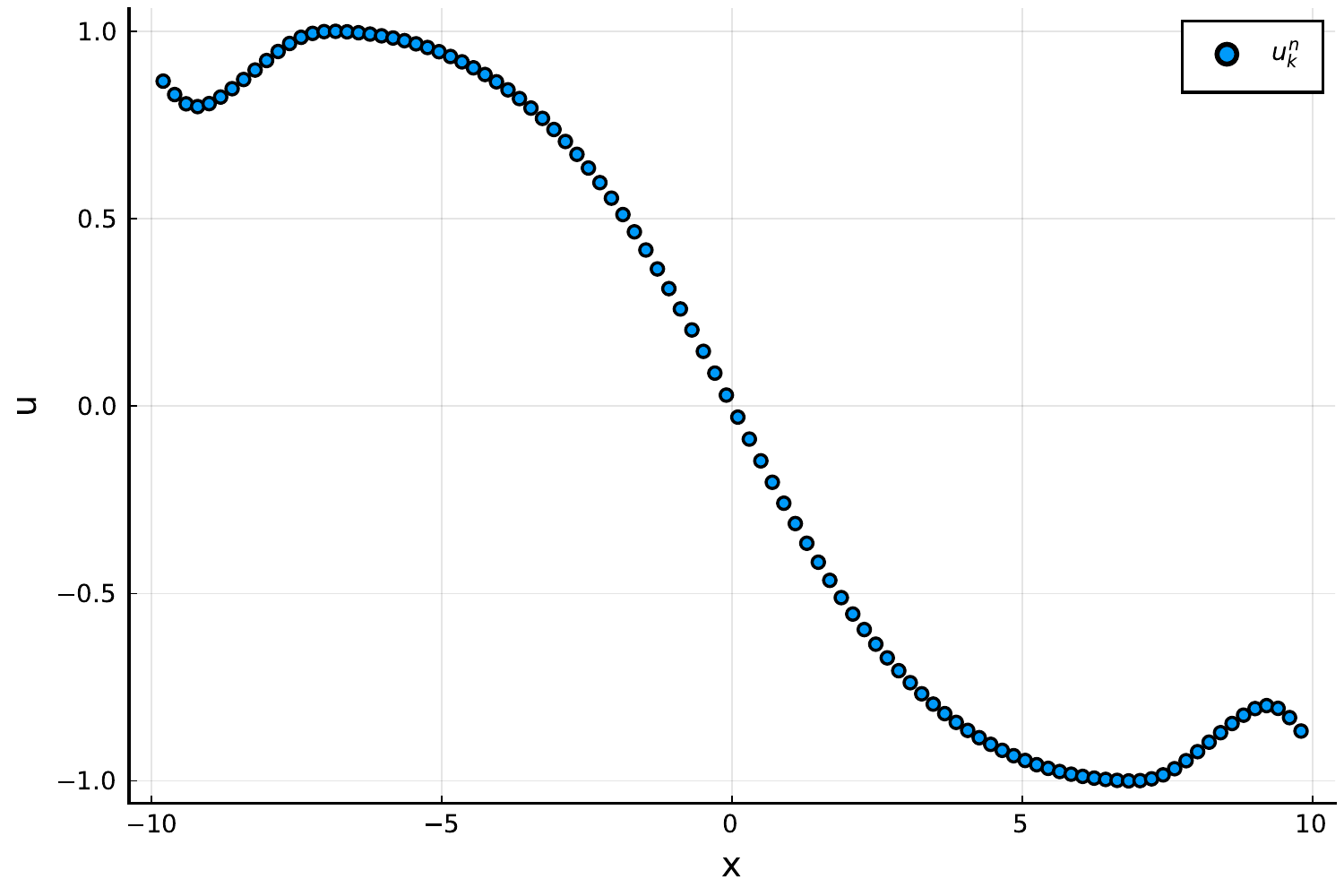}
		\caption{Solution at $T = 3$}
	\end{subfigure}
	\hfill
	\begin{subfigure}{0.45\textwidth}
		\includegraphics[width=\textwidth]{./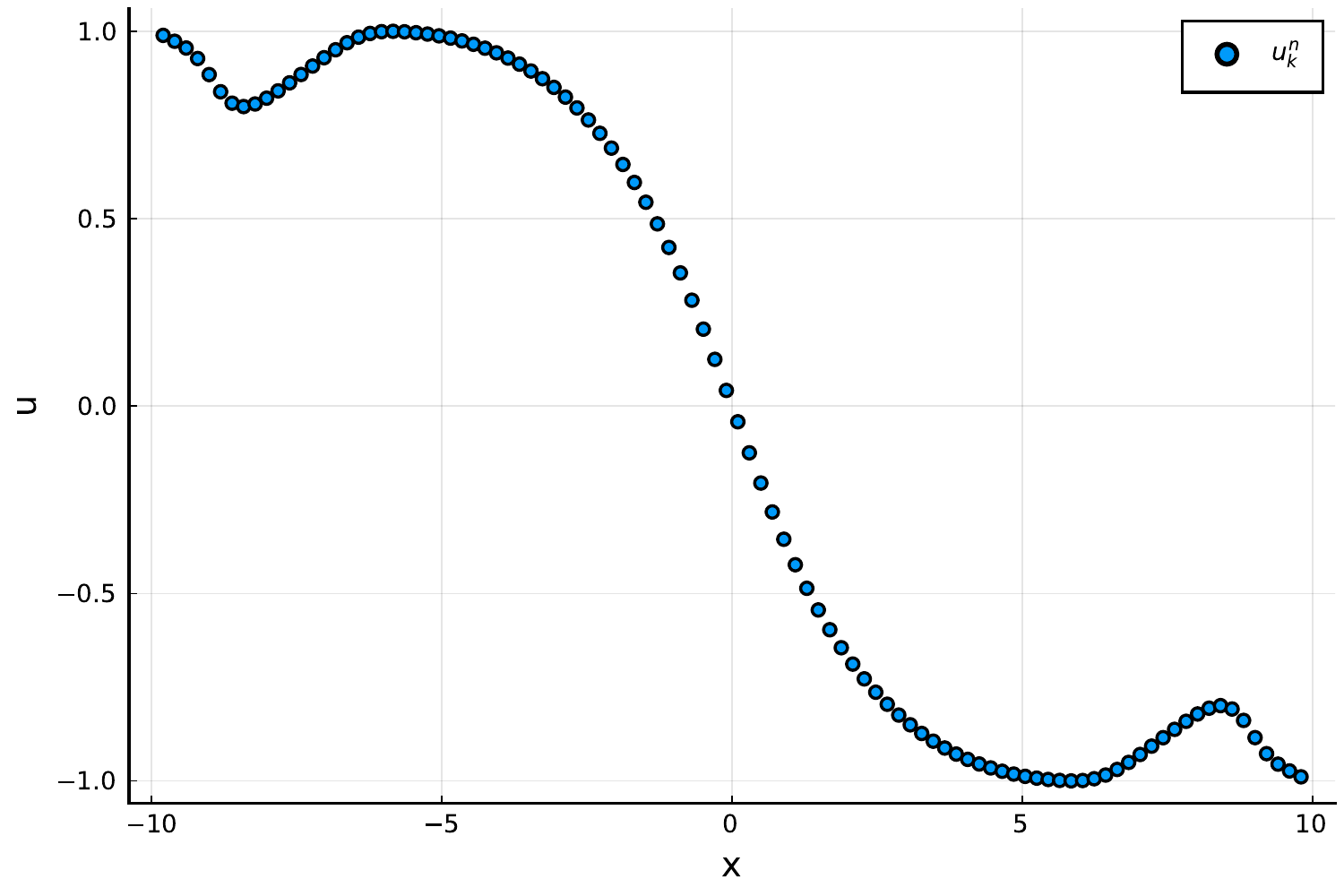}
		\caption{Solution at $T = 4$}
	\end{subfigure}
	\begin{subfigure}{0.45\textwidth}
		\includegraphics[width=\textwidth]{./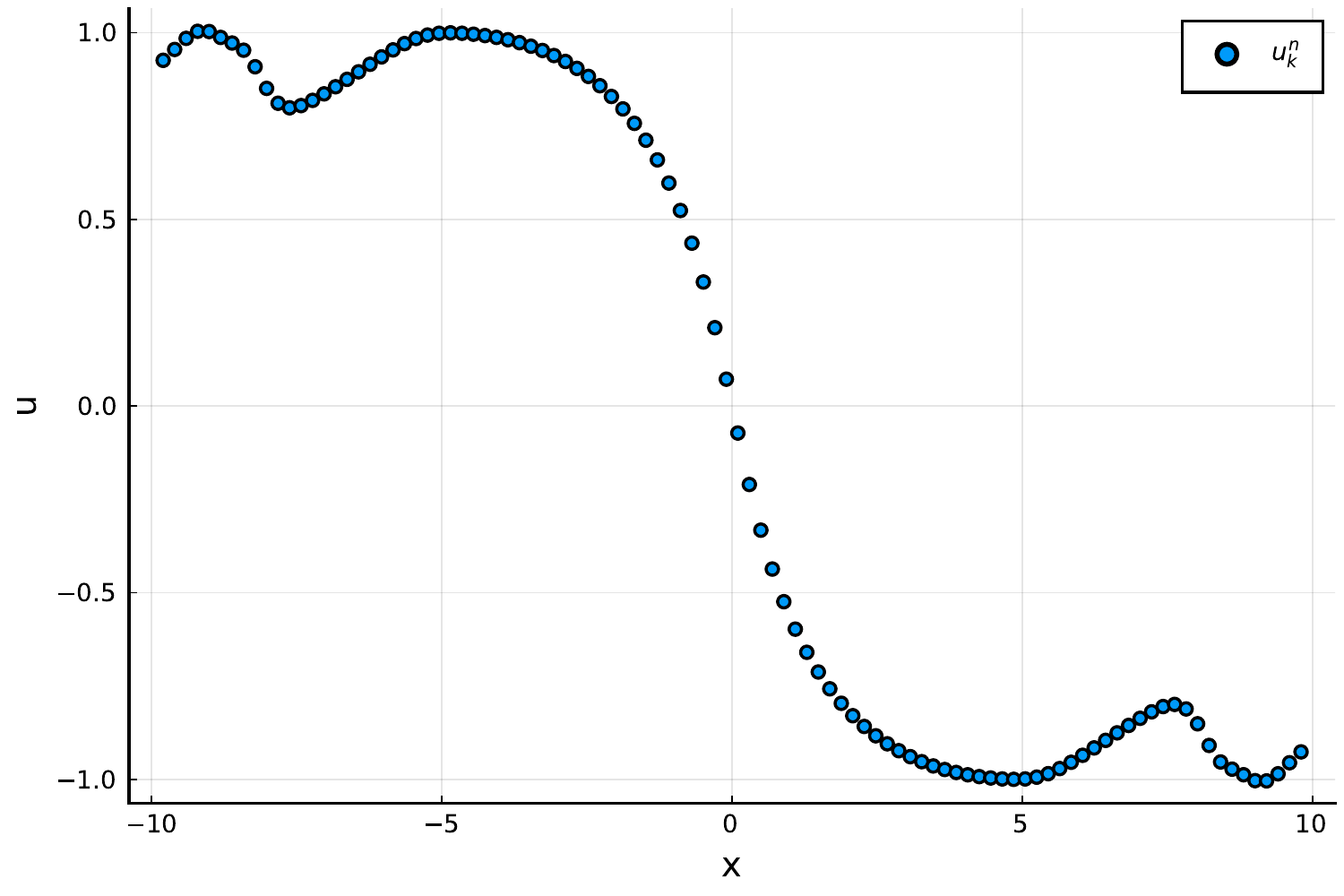}
		\caption{Solution at $T = 5$}
	\end{subfigure}
	\hfill
	\begin{subfigure}{0.45\textwidth}
		\includegraphics[width=\textwidth]{./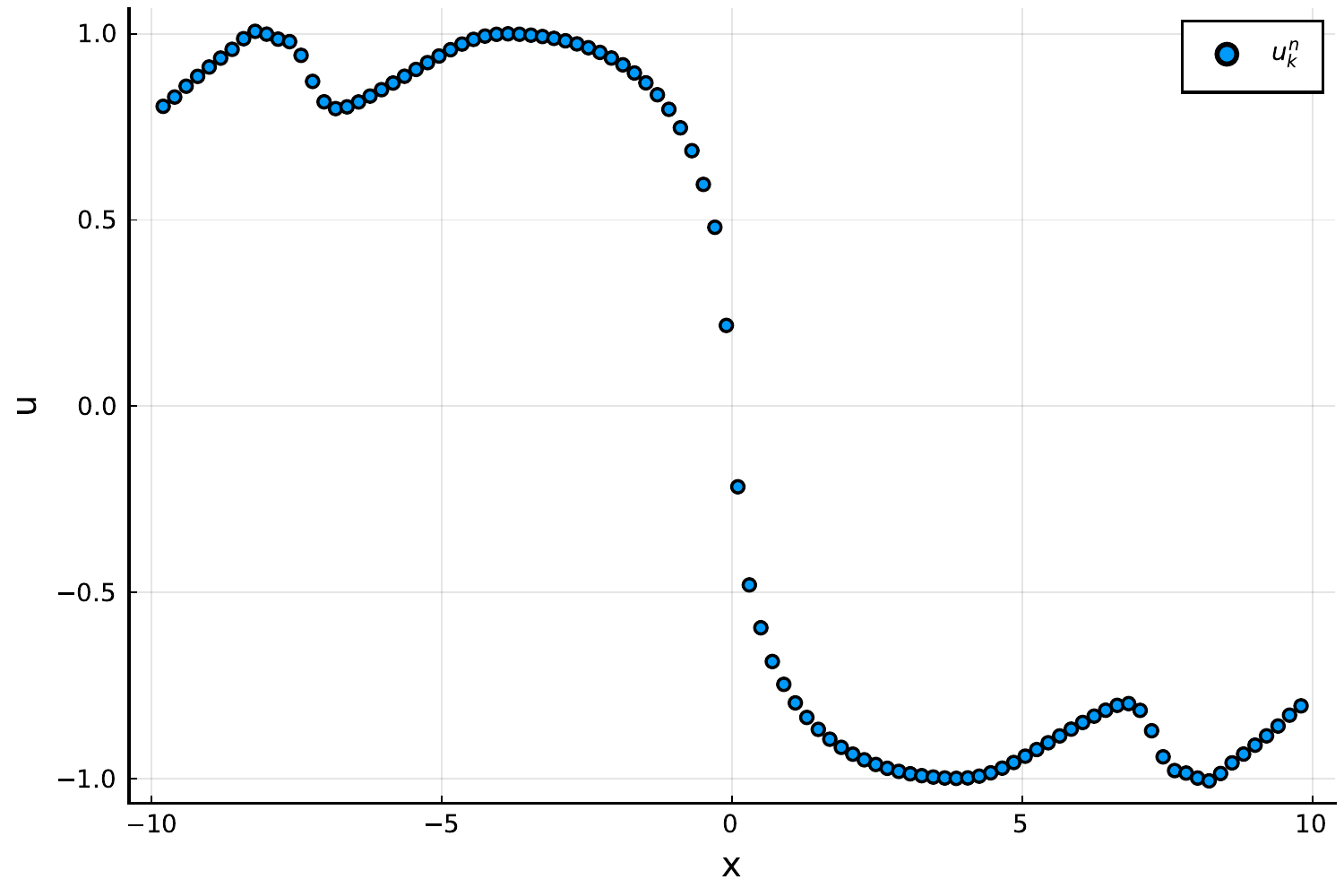}
		\caption{Solution at $T = 6$}
	\end{subfigure}
	\caption{Solution to the Burgers' equation using the entropy conservative flux of Tadmor augmented with the centered and non-centered linear combination from Section \ref{se:bdflux} to produce an $2p$ order accurate numerical flux. In this case $p=3$ was used. The entropy equality is satisfied up to a tolerance of $10^{-14}$. This error can be attributed to floating point rounding.}
	\label{fig:BCsol}
\end{figure}
\begin{figure}[H]
	\begin{subfigure}{0.48\textwidth}
		\includegraphics[width=\textwidth]{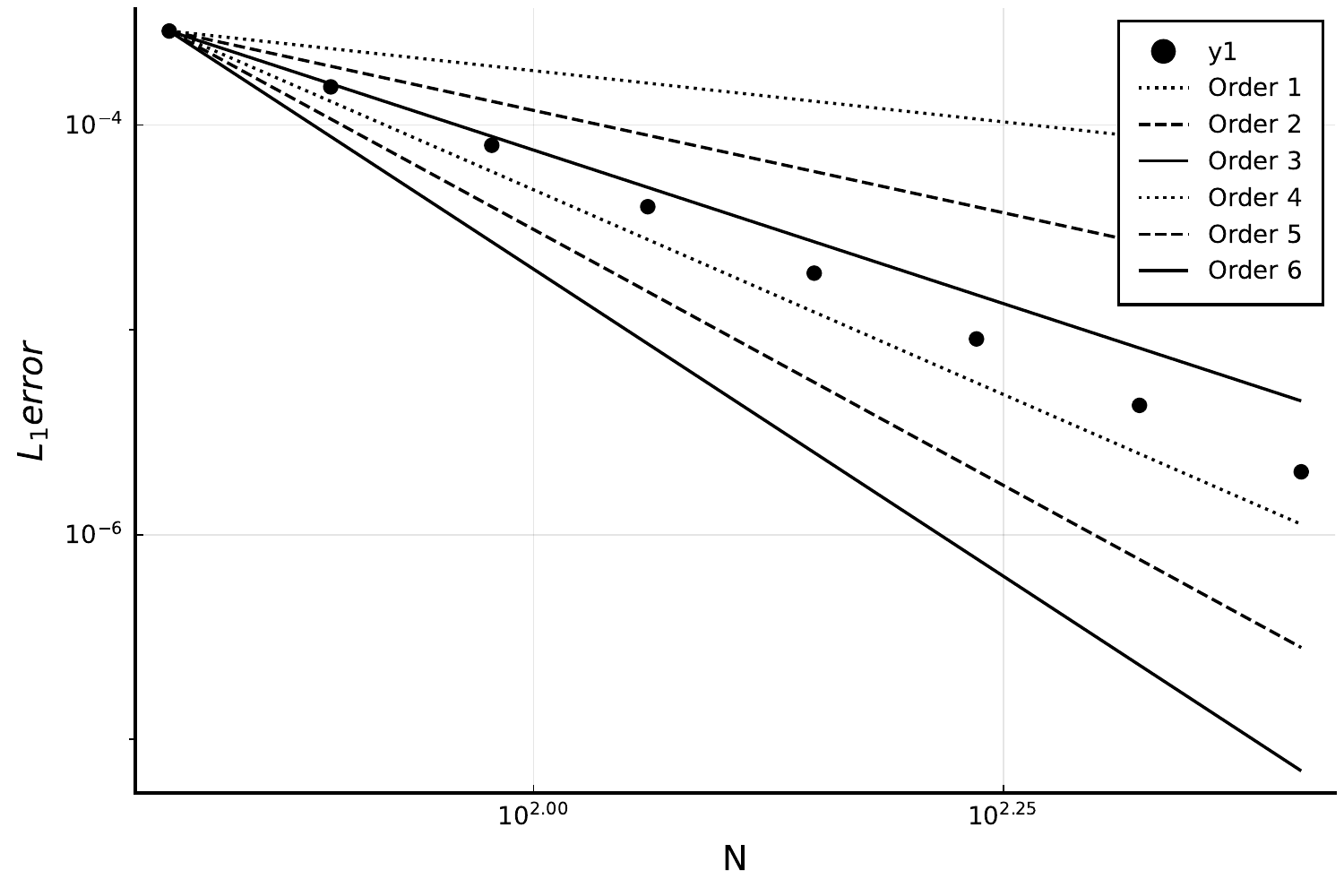}
		\subcaption{Convergence analysis for $p=2$}
	\end{subfigure}
	\hfill
	\begin{subfigure}{0.48\textwidth}
		\includegraphics[width=\textwidth]{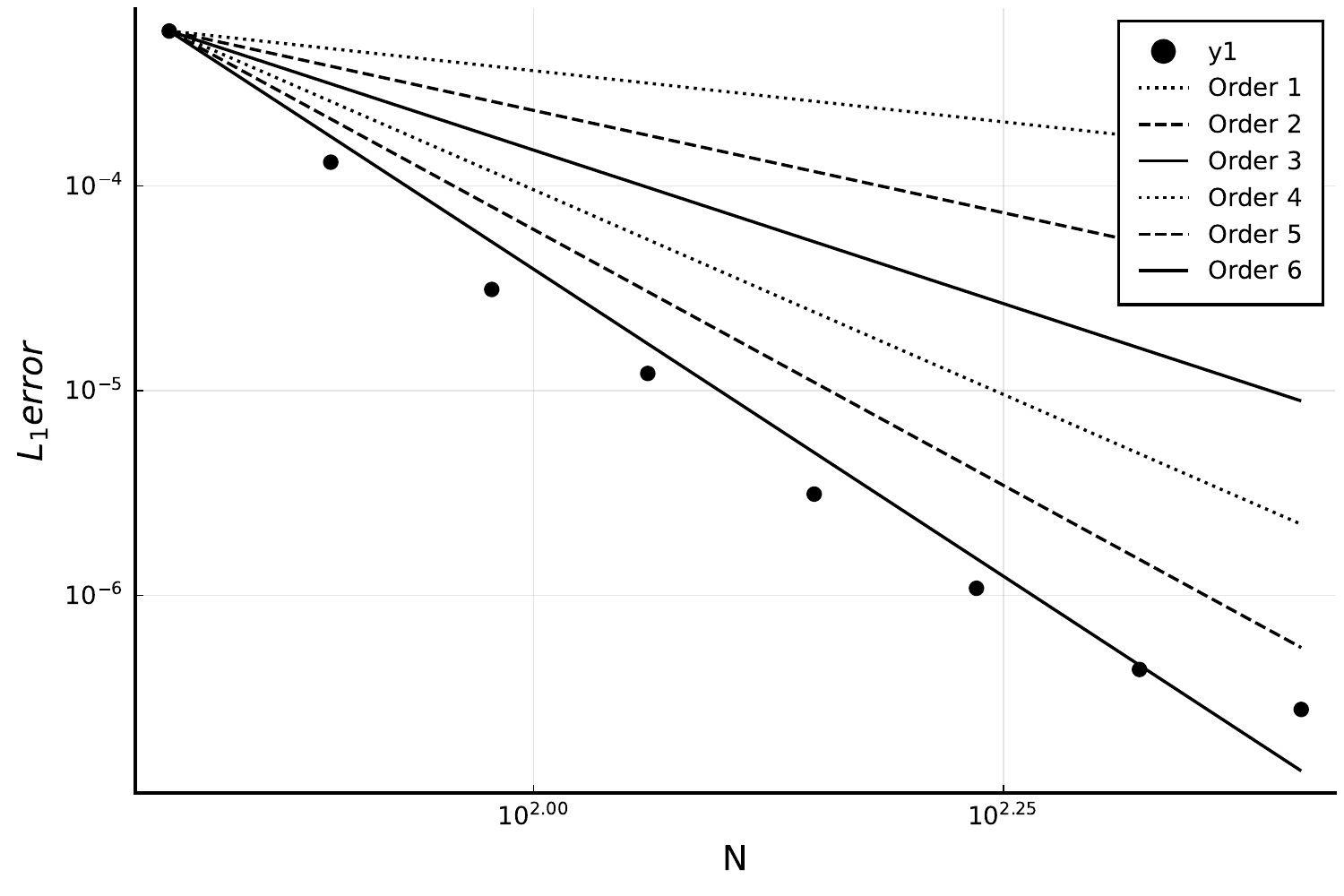}
		\subcaption{Convergence analysis for $p=3$}
	\end{subfigure}
	\caption{Convergence analysis for the high order entropy conservative boundary attached fluxes of section \ref{se:bdflux}. One expects an $2p$-order convergence rate from Theorem \ref{lem:HOBF} and this order is also demonstrated.}
\end{figure}
In Section \ref{se:bdflux},  we provided a novel approach to construct entropy conservative numerical fluxes of global order $2p$ by generalizing the linear combinations done by LeFloch, Mercier and Rhode. These fluxes are designed to handle, in particular, problems with non-periodic boundaries. 
We will now test if these fluxes are able to solve problems with the predicted order of accuracy. As no means of dissipation are present in these fluxes it is imperative to present a smooth problem with matching boundary conditions for this demonstration. The domain $\Omega = [-10, 10]$ was chosen with the initial and boundary conditions
\[
u(x, 0) = \sin\left( \frac {-\pi x}{20} \right), \quad u_l(t) = \frac{9}{10} + \frac{\cos\left(\frac{\pi t}{2}\right)}{10}, \quad  u_r(t) = -\frac{9}{10} - \frac{\cos\left(\frac{\pi t}{2}\right)}{10}
\]
for Burgers' equation
\[
\partial_t u + \partial_x \frac{u^2}{2}= 0.
\]
Solving these up to $T = 6$ with the presented fluxes for all non-inner points leads to the graphs in Figure \ref{fig:BCsol}. Time integration was carried out by the SSRPK(3,3) method and a grid of 100 points with a CFL number of $\lambda = 0.25$ was used. The results look satisfactory, but this was awaited already for a smooth problem as our fluxes are consistent and entropy conservative. 
The achieved experimental order of accuracy will be tested next. For this purpose 
\[
u(x, 0) = 1, \quad u_l(t) = 1 + \frac{\mathrm{e}^{-(t-5)^2}}{50}, \quad  u_r(t) = 1.0
\] 
was solved using $8$ exponentially spaced values for $N$ in the interval $64$ to  $256$. These solutions were compared to a reference solution calculated by an ENO scheme of order $2$ on a fine grid with $N = 16384$ cells. The ENO scheme \cite{ENOIII, SO1988, SO1989, SonarENO} used the Lax-Friedrichs flux and time integration was carried out using the SSPRK(3, 3) method.  
The CFL number was set to $\lambda = 0.5 \frac{64}{N}$ as this allows us to study the convergence up to order $6$ without problems stemming from the only third order convergence of the time stepping method. The theoretically predicted orders of convergence of $4$ and $6$ are demonstrated in the experiment for $p=2$ and $p=3$. 
\begin{figure}[H]
	\begin{subfigure}{0.48\textwidth}
		\includegraphics[width=\textwidth]{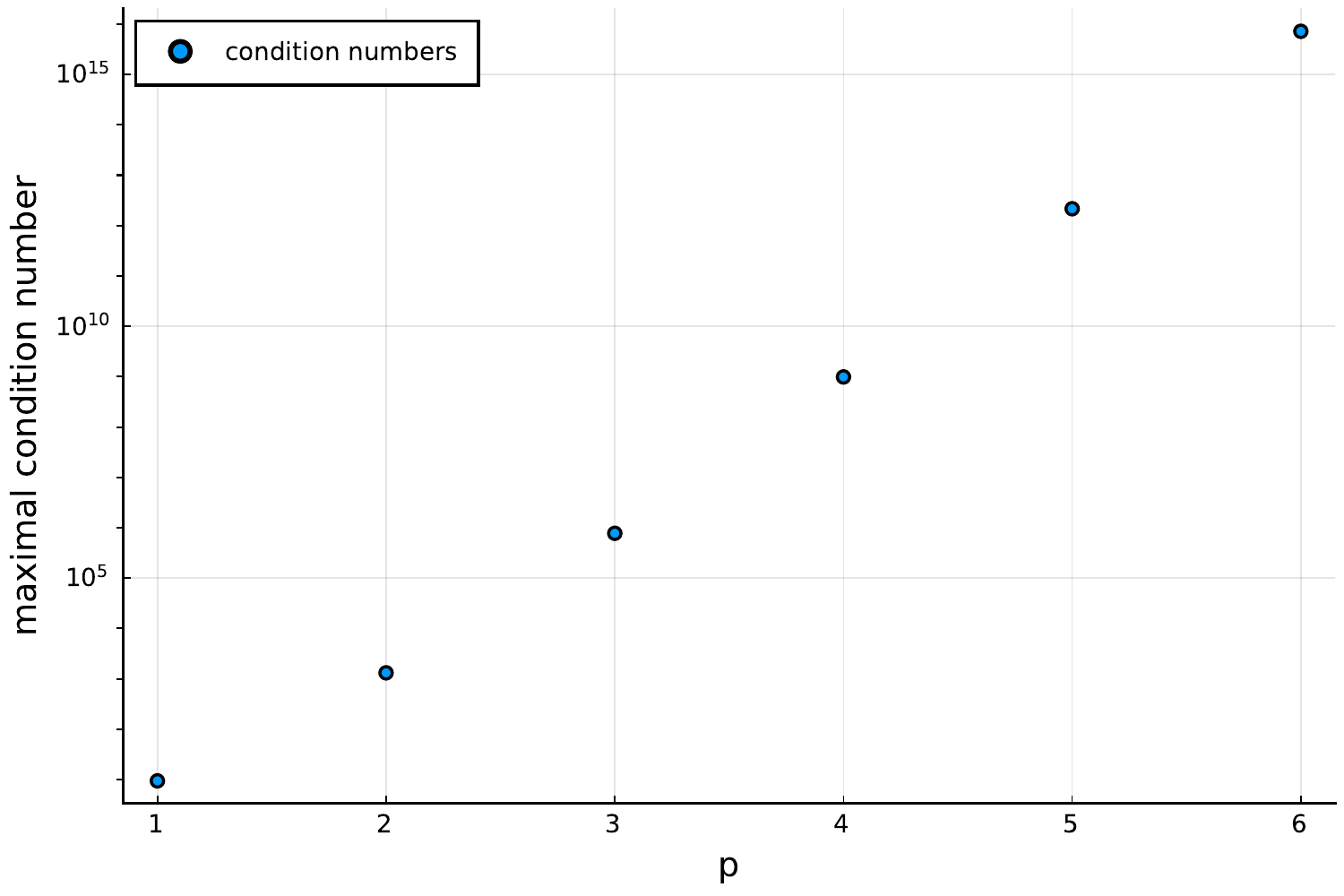}
		\subcaption{Maximal condition numbers for given $p$ used in the construction of the difference matrices of theorem \ref{lem:HOBF}. Note the logarithmic y-axis. $1000$ bit precision numbers were used for these calculations.}
	\end{subfigure}
	\hfill
	\begin{subfigure}{0.48\textwidth}
		\includegraphics[width=\textwidth]{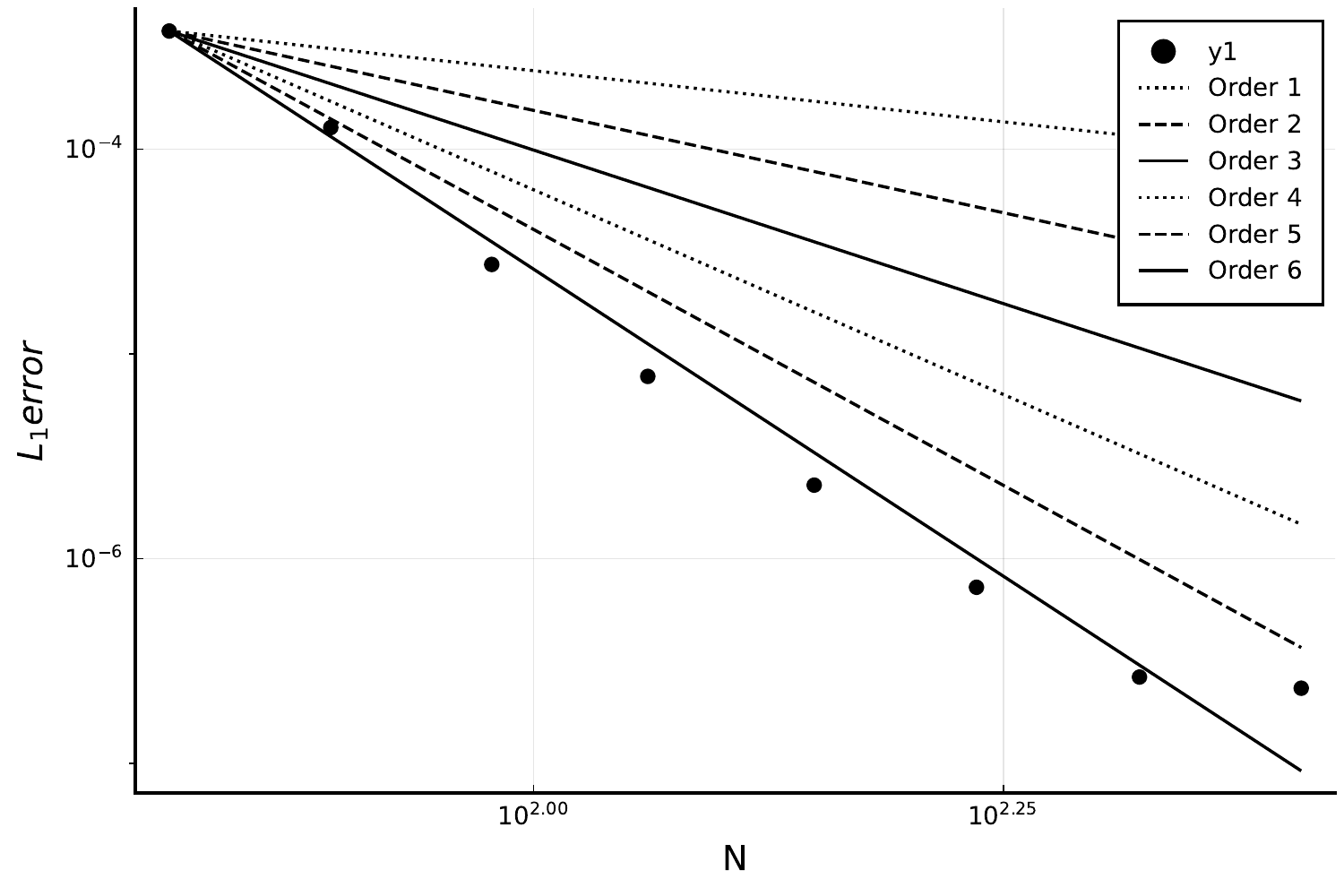}
		\subcaption{Convergence analysis for $p=4$ using only sixth order accurate boundary fluxes. The higher order of accuracy predicted is not demonstrated. Still, the error at the same amount of points is lower than for $p=3$. }
	\end{subfigure}
	\caption{Results concerning the highest order versions and the possible cause in the condition numbers of the used matrices.}
\end{figure}
\subsection{The forward facing step problem}
To underline the abilites of the method a more challenging problem was selected to demonstrate the usage of inflow, reflective and outflow boundary conditions for the Euler equations
	\begin{equation}
	\begin{aligned}
	\derive{u}{t} + \derive{f}{x} + \derive{g}{y} &= 0 \\
u =\begin{pmatrix} \rho \\ \rho v_x \\ \rho v_y\\ E\end{pmatrix},
\quad f\begin{pmatrix} \rho \\ \rho v_x\\ \rho v_y\\ E\end{pmatrix} &= \begin{pmatrix} \rho v_x \\  \rho v_x^2 + p\\ \rho v_x v_y \\ v_x(E + p) \end{pmatrix} ,
\quad g\begin{pmatrix} \rho \\ \rho v_x  \\ \rho v_y\\ E\end{pmatrix} = \begin{pmatrix} \rho v_y \\  \rho v_x v_y \\ \rho v_y^2 + p\\ v_y(E + p) \end{pmatrix} 
	\end{aligned}
\end{equation}
 in two dimensions. The entropy conservative fluxes from \cite{RCFM2016ES} were used in conjunction with the entropy inequality predictors from \cite{klein2022using} to steer the dissipation. As a testcase, the forward facing step problem was selected \cite{WC1984Numerical}. The domain of this problem is built out of a rectangular base domain of length $3$ and height $1$, from which a rectangular step was cutted out, as depicted in figure \ref{fig:ffsdomain}. The step has height $\frac 1 5$ and starts $\frac 3 5$ units into the domain. At the beginning of the calculation, the domain is filled with gas with the state
 \[
 	\rho(x, y, 0) = 1.4, \quad v_x(x, y, 0) = 3.0, \quad v_y(x, y, 0) = 0.0, \quad p(x, y, 0) = 1.0.
 \]
 Gas with this state is fed into the domain from the left, while the top and bottom of the domain use reflective boundary conditions. The right end of the domain uses outflow boundary conditions. The most problematic part is the vertical section of the step as at this point reflective boundary conditions reflect the incoming flow in $x$ direction next to the edge that reflect into the $y$ direction. While in \cite{WC1984Numerical} special cosmetic repairs were applied in this area, we did not use any special algorithm for the corner of the edge. Instead, the boundary conditions as explained in subsection \ref{ssec:bcs} were used at the respective corners. The simulation was carried out with a CFL number of $0.3$ and the SSPRK33 time integration algorithm up to $t = 4.0$. The result can be seen in figure \ref{fig:ffstest}. A grid of $80$ by $240$ cells was used.
 
 \begin{figure}[h]
 	\centering
 	\begin{tikzpicture}[scale = 3.0]
 		\draw (0.0, 0.0) -- (0.6, 0.0) -- (0.6, 0.2) -- (3.0, 0.2) -- (3.0, 1.0) -- (0.0, 1.0) -- (0.0, 0.0);
 		\draw [->] (0.3, 0.5) -- (1.0, 0.5) node [below] {$v_\mathrm{in}$};
 	\end{tikzpicture}
 	\caption{Setup for the forward facing step problem.}
 	\label{fig:ffsdomain}
 	\end{figure}
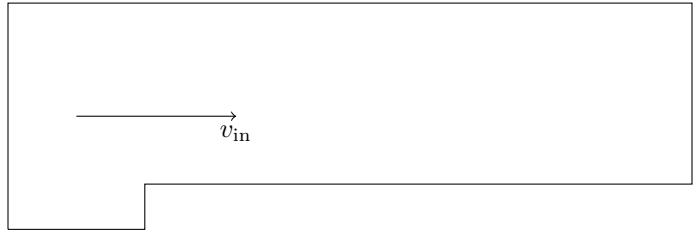
 
 \begin{figure}
 	\begin{subfigure}{0.98\textwidth}
 		\includegraphics[width=0.98 \textwidth]{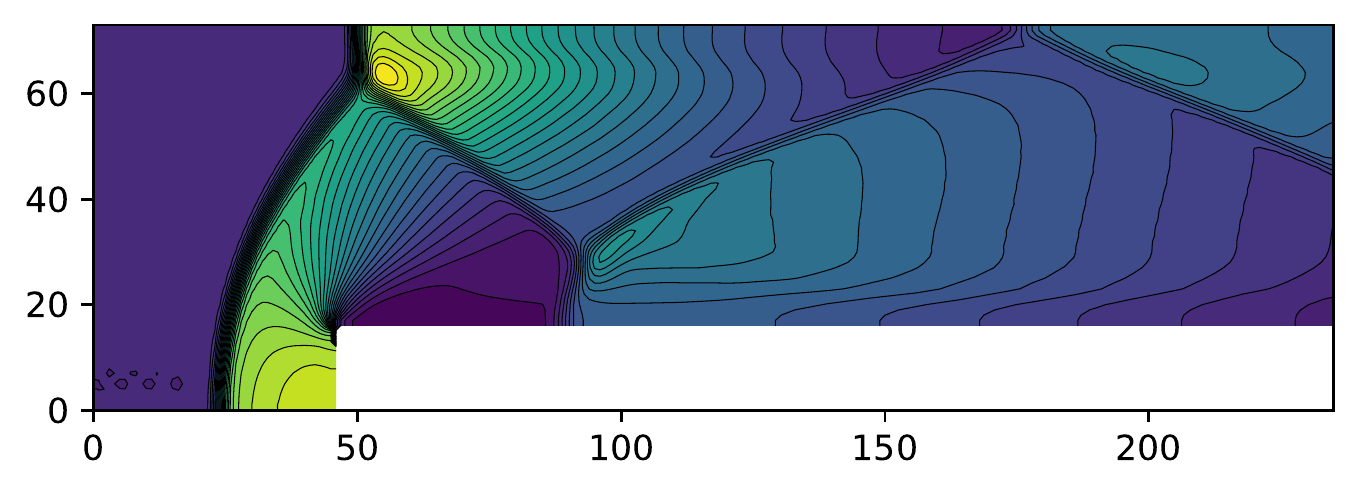}
 		\caption{Density at $t=3.0$, 30 equidistributed contour lines}
 	\end{subfigure}
 \begin{subfigure}{0.98\textwidth}
 	\includegraphics[width=0.98 \textwidth]{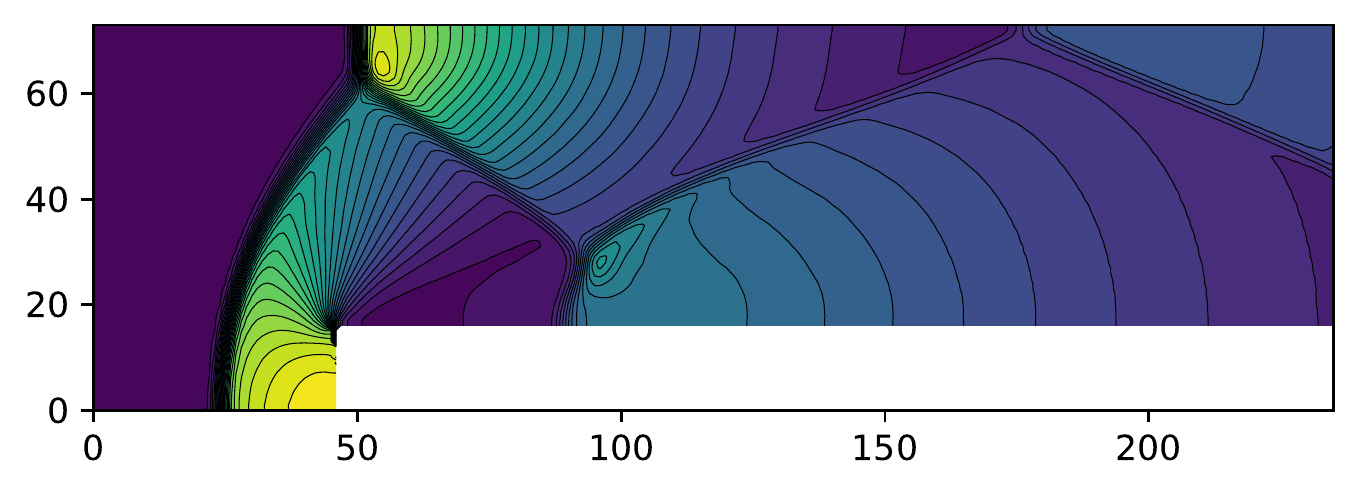}
 	\caption{Pressure at $t=3.0$, 30 equidistributed contour lines}
 \end{subfigure}
 	\caption{Solution to the forward facing step problem at $t = 3.0$. A grid of $80 \times 240$ cells, including the ghost cells, was used.}
 	\label{fig:ffstest}
 \end{figure}

\section{Conclusion}
In this publication entropy conservative fluxes near boundaries were constructed. Afterwards the Lax-Wendroff theorem was verified for entropy dissipative schemes based on these fluxes. The accuracy of the designed fluxes and their ability to implement boundary conditions were afterwards demonstrated in numerical experiments. The designed fluxes are not only usable for the schemes presented in this publication, but could be also used in TeCNO schemes for instant \cite{fjordholm2012arbi}.

\section{Acknowledgements}
This work was partially supported by the German Science Foundation (DFG) under Grant SO 363/14-1 (Klein) and the Gutenberg Research College, JGU Mainz (\"Offner). 

\bibliographystyle{plain}
\bibliography{literature}

\section*{Appendix}
The needed matrices to calculate the boundary fluxes for $p=2$ and $p=3$ are given here for completeness. Sadly, the matrices for $p \geq 4$ seem to have elements with huge denominators, which is why they were not printed. We denote with $A^{p, q}$ the matrix for fluxes of order $2p$ with $q = 0$ for the inner flux, $q > 0$ the right and $q < 0$ the left boundary fluxes. 
\subsection*{Matrices for $p = 2$}
\begin{equation*}
A^{2, -2} =	\left[
	\begin{array}{ccccc}
		0 & \frac{11}{24} & \frac{-1}{12} & 0 & 0 \\
		\frac{11}{24} & 0 & \frac{5}{12} & \frac{-5}{12} & \frac{1}{8} \\
		\frac{-1}{12} & \frac{5}{12} & 0 & 0 & 0 \\
		0 & \frac{-5}{12} & 0 & 0 & 0 \\
		0 & \frac{1}{8} & 0 & 0 & 0 \\
	\end{array}
	\right], 
A^{2, -1} =	\left[
	\begin{array}{ccccc}
		0 & 0 & \frac{-1}{12} & 0 & 0 \\
		0 & 0 & \frac{2}{3} & \frac{-1}{12} & 0 \\
		\frac{-1}{12} & \frac{2}{3} & 0 & 0 & 0 \\
		0 & \frac{-1}{12} & 0 & 0 & 0 \\
		0 & 0 & 0 & 0 & 0 \\
	\end{array}
	\right]
\end{equation*}
\begin{equation*}
A^{2, 0} = \left[
	\begin{array}{cccc}
		0 & 0 & \frac{-1}{12} & 0 \\
		0 & 0 & \frac{2}{3} & \frac{-1}{12} \\
		\frac{-1}{12} & \frac{2}{3} & 0 & 0 \\
		0 & \frac{-1}{12} & 0 & 0 \\
	\end{array}
	\right], 
A^{2, 1} = \left[
	\begin{array}{ccccc}
		0 & 0 & 0 & 0 & 0 \\
		0 & 0 & 0 & \frac{-1}{12} & 0 \\
		0 & 0 & 0 & \frac{2}{3} & \frac{-1}{12} \\
		0 & \frac{-1}{12} & \frac{2}{3} & 0 & 0 \\
		0 & 0 & \frac{-1}{12} & 0 & 0 \\
	\end{array}
	\right]
\end{equation*}
\begin{equation*}
A^{2, 2} = \left[
	\begin{array}{ccccc}
		0 & 0 & 0 & \frac{1}{8} & 0 \\
		0 & 0 & 0 & \frac{-5}{12} & 0 \\
		0 & 0 & 0 & \frac{5}{12} & \frac{-1}{12} \\
		\frac{1}{8} & \frac{-5}{12} & \frac{5}{12} & 0 & \frac{11}{24} \\
		0 & 0 & \frac{-1}{12} & \frac{11}{24} & 0 \\
	\end{array}
	\right]
\end{equation*}

\subsection*{Matrices for $p = 3$}
\begin{equation*}
A^{3, -3}=\left[
	\begin{array}{ccccccc}
		0 & \frac{137}{360} & \frac{-13}{180} & \frac{1}{60} & 0 & 0 & 0 \\
		\frac{137}{360} & 0 & \frac{161}{120} & \frac{-497}{180} & \frac{287}{120} & \frac{-31}{30} & \frac{13}{72} \\
		\frac{-13}{180} & \frac{161}{120} & 0 & \frac{7}{36} & \frac{-7}{30} & \frac{7}{60} & \frac{-1}{45} \\
		\frac{1}{60} & \frac{-497}{180} & \frac{7}{36} & 0 & 0 & 0 & 0 \\
		0 & \frac{287}{120} & \frac{-7}{30} & 0 & 0 & 0 & 0 \\
		0 & \frac{-31}{30} & \frac{7}{60} & 0 & 0 & 0 & 0 \\
		0 & \frac{13}{72} & \frac{-1}{45} & 0 & 0 & 0 & 0 \\
	\end{array}
	\right],
A^{3, -2} =	\left[
	\begin{array}{ccccccc}
		0 & 0 & \frac{-13}{180} & \frac{1}{60} & 0 & 0 & 0 \\
		0 & 0 & \frac{19}{30} & \frac{-3}{20} & \frac{1}{60} & 0 & 0 \\
		\frac{-13}{180} & \frac{19}{30} & 0 & \frac{7}{36} & \frac{-7}{30} & \frac{7}{60} & \frac{-1}{45} \\
		\frac{1}{60} & \frac{-3}{20} & \frac{7}{36} & 0 & 0 & 0 & 0 \\
		0 & \frac{1}{60} & \frac{-7}{30} & 0 & 0 & 0 & 0 \\
		0 & 0 & \frac{7}{60} & 0 & 0 & 0 & 0 \\
		0 & 0 & \frac{-1}{45} & 0 & 0 & 0 & 0 \\
	\end{array}
	\right]
\end{equation*}
\begin{equation*}
A^{3, -1} =	\left[
	\begin{array}{ccccccc}
		0 & 0 & 0 & \frac{1}{60} & 0 & 0 & 0 \\
		0 & 0 & 0 & \frac{-3}{20} & \frac{1}{60} & 0 & 0 \\
		0 & 0 & 0 & \frac{3}{4} & \frac{-3}{20} & \frac{1}{60} & 0 \\
		\frac{1}{60} & \frac{-3}{20} & \frac{3}{4} & 0 & 0 & 0 & 0 \\
		0 & \frac{1}{60} & \frac{-3}{20} & 0 & 0 & 0 & 0 \\
		0 & 0 & \frac{1}{60} & 0 & 0 & 0 & 0 \\
		0 & 0 & 0 & 0 & 0 & 0 & 0 \\
	\end{array}
	\right], 
A^{3, 0} =	\left[
	\begin{array}{cccccc}
		0 & 0 & 0 & \frac{1}{60} & 0 & 0 \\
		0 & 0 & 0 & \frac{-3}{20} & \frac{1}{60} & 0 \\
		0 & 0 & 0 & \frac{3}{4} & \frac{-3}{20} & \frac{1}{60} \\
		\frac{1}{60} & \frac{-3}{20} & \frac{3}{4} & 0 & 0 & 0 \\
		0 & \frac{1}{60} & \frac{-3}{20} & 0 & 0 & 0 \\
		0 & 0 & \frac{1}{60} & 0 & 0 & 0 \\
	\end{array}
	\right]
\end{equation*}
\begin{equation*}
A^{3, 1} =	\left[
	\begin{array}{ccccccc}
		0 & 0 & 0 & 0 & 0 & 0 & 0 \\
		0 & 0 & 0 & 0 & \frac{1}{60} & 0 & 0 \\
		0 & 0 & 0 & 0 & \frac{-3}{20} & \frac{1}{60} & 0 \\
		0 & 0 & 0 & 0 & \frac{3}{4} & \frac{-3}{20} & \frac{1}{60} \\
		0 & \frac{1}{60} & \frac{-3}{20} & \frac{3}{4} & 0 & 0 & 0 \\
		0 & 0 & \frac{1}{60} & \frac{-3}{20} & 0 & 0 & 0 \\
		0 & 0 & 0 & \frac{1}{60} & 0 & 0 & 0 \\
	\end{array}
	\right],
A^{3, 2} =	\left[
	\begin{array}{ccccccc}
		0 & 0 & 0 & 0 & \frac{-1}{45} & 0 & 0 \\
		0 & 0 & 0 & 0 & \frac{7}{60} & 0 & 0 \\
		0 & 0 & 0 & 0 & \frac{-7}{30} & \frac{1}{60} & 0 \\
		0 & 0 & 0 & 0 & \frac{7}{36} & \frac{-3}{20} & \frac{1}{60} \\
		\frac{-1}{45} & \frac{7}{60} & \frac{-7}{30} & \frac{7}{36} & 0 & \frac{19}{30} & \frac{-13}{180} \\
		0 & 0 & \frac{1}{60} & \frac{-3}{20} & \frac{19}{30} & 0 & 0 \\
		0 & 0 & 0 & \frac{1}{60} & \frac{-13}{180} & 0 & 0 \\
	\end{array}
	\right]
\end{equation*}
\begin{equation*}
A^{3, 3} =	\left[
	\begin{array}{ccccccc}
		0 & 0 & 0 & 0 & \frac{-1}{45} & \frac{13}{72} & 0 \\
		0 & 0 & 0 & 0 & \frac{7}{60} & \frac{-31}{30} & 0 \\
		0 & 0 & 0 & 0 & \frac{-7}{30} & \frac{287}{120} & 0 \\
		0 & 0 & 0 & 0 & \frac{7}{36} & \frac{-497}{180} & \frac{1}{60} \\
		\frac{-1}{45} & \frac{7}{60} & \frac{-7}{30} & \frac{7}{36} & 0 & \frac{161}{120} & \frac{-13}{180} \\
		\frac{13}{72} & \frac{-31}{30} & \frac{287}{120} & \frac{-497}{180} & \frac{161}{120} & 0 & \frac{137}{360} \\
		0 & 0 & 0 & \frac{1}{60} & \frac{-13}{180} & \frac{137}{360} & 0 \\
	\end{array}
	\right]
\end{equation*}

\end{document}